\newtheorem{thm}{Theorem}[section]
\newtheorem{prop}[thm]{Proposition}
\newtheorem{lemma}[thm]{Lemma}
\newtheorem{cor}[thm]{Corollary}
\def\XXint#1#2#3{{\setbox0=\hbox{$#1{#2#3}{\int}$ }
\vcenter{\hbox{$#2#3$ }}\kern-.6\wd0}}
\theoremstyle{definition}
\newtheorem{definition}[thm]{Definition}
\newtheorem{example}[thm]{Example}
\theoremstyle{remark}
\newtheorem{remark}[thm]{Remark}
\numberwithin{equation}{section}
\newtheoremstyle{ser}
{8pt}
{8pt}
{\it}
{}
{\sf}
{:}
{6mm}
{}
\newtheoremstyle{serr}
{8pt}
{8pt}
{\normalfont}
{}
{\sf}
{.}
{6mm}
{}
\theoremstyle{ser}
\theoremstyle{serr}
\begin{document}
\title{Jensen's Inequality in Finite Subdiagonal Algebras}

\author{Soumyashant Nayak}
\address{Smilow Center for Translational Research, University of Pennsylvania, Philadelphia, PA 19104}
\email{nsoum@upenn.edu}
\urladdr{https://nsoum.github.io/} 


\begin{abstract}
Let $\mathscr{M}$ be a finite von Neumann algebra with a faithful normal tracial state $\tau$ and $\mathfrak{A}$ be a finite subdiagonal subalgebra of $\mathscr{M}$ with respect to a $\tau$-preserving faithful normal conditional expectation $\Phi$ on $\mathscr{M}$. Let $\Delta$ denote the Fuglede-Kadison determinant corresponding to $\tau$. For $X \in \mathscr{M}$, define $|X| := (X^*X)^{\frac{1}{2}}$. In 2005, Labuschagne proved the so-called Jensen's inequality for finite subdiagonal algebras {\it i.e.}\ $\Delta(\Phi(A)) \le \Delta(A)$ for an operator $A \in \mathfrak{A}$, thus resolving a long-standing open problem posed by Arveson in 1967. In this article, we prove the following more general result: $\tau(f( |\Phi(A)|)) \le \tau(f( |A|))$ for $A \in \mathfrak{A}$ and any increasing continuous function $f : [0, \infty) \to \mathbb{R}$ such that $f \circ \exp$ is convex on $\mathbb{R}$. Under the additional hypotheses that $A$ is invertible in $\mathscr{M}$ and $f \circ \exp$ is strictly convex, we have $\tau(f( |\Phi(A)|)) = \tau(f( |A|)) \Longleftrightarrow \Phi(A) = A$. As an application, we show that for $A \in \mathfrak{A}$ the point spectrum of $A$ is contained in the point spectrum of $\Phi(A)$, though such a conclusion does not hold in general for their spectra.

\bigskip\noindent
{\bf Keywords:}
Finite subdiagonal algebras, Jensen's inequality, Generalized $s$-numbers, Upper triangular matrices
\vskip 0.01in \noindent
{\bf MSC2010 subject classification:} 47C15, 46L10, 15A45
\end{abstract}

\maketitle

\section{Introduction}

Let $\mathscr{M}$ be a finite von Neumann algebra with a faithful normal tracial state $\tau$. A conditional expectation from $\mathscr{M}$ onto a von Neumann subalgebra $\mathscr{N}$ is defined to be a positive linear map $\Phi : \mathscr{M} \to \mathscr{N}$ which preserves the identity operator $I$ and satisfies $\Phi(YX) = Y \Phi(X)$ for all $X \in \mathscr{M}$ and $Y \in \mathscr{N}$. If $\tau(\Phi(X)) = X$ for all $X \in \mathscr{M}$, we say that $\Phi$ is $\tau$-preserving. A $\tau$-preserving conditional expectation is automatically faithful and normal. An archetypal example of a conditional expectation is the map from $M_n(\mathbb{C})$, the set of $n \times n$ complex matrices,  to $D_n(\mathbb{C})$, the set of diagonal matrices, which sends a matrix to a diagonal matrix with the same diagonal entries. With this in mind, we may think of $\mathscr{N}$ as the `diagonal' subalgebra of $\mathscr{M}$ with respect to $\Phi$.

\begin{definition}
Let $\Phi$ be a faithful normal conditional expectation from $\mathscr{M}$ onto $\mathscr{N}$. Let $\mathfrak{A}$ be a ultraweakly closed subalgebra of $\mathscr{M}$ containing the identity operator $I$ such that $\mathscr{N} = \mathfrak{A} \cap \mathfrak{A}^*$ (the diagonal of $\mathfrak{A}$). Then $\mathfrak{A}$ is said to be a finite subdiagonal subalgebra of $\mathscr{M}$ with respect to $\Phi$ if:
\begin{itemize}
\item[(i)] $\mathfrak{A} + \mathfrak{A}^*$ is ultraweakly dense in $\mathscr{M}$,
\item[(ii)] $\Phi(AB) = \Phi(A) \Phi(B)$ for all $A, B \in \mathfrak{A}$,
\item[(iii)] $\Phi$ is $\tau$-preserving for a faithful normal tracial state $\tau$ on $\mathscr{M}$.
\end{itemize}
\end{definition}
If $\mathfrak{A}$ is not properly contained in another finite subdiagonal algebra with respect to $\Phi$, then $\mathfrak{A}$ is said to be {\it maximal}. By \cite[Theorem 7]{exel}, (ultraweakly closed) finite subdiagonal algebras are automatically maximal. Because of the maximality, we have at our disposal the Arveson-Beurling factorization theorem (as paraphrased in Lemma \ref{lem:arveson_fac}).

In \cite{arveson_analyticity}, Arveson developed the theory of subdiagonal algebras with a view towards a unified treatment for various results known then concerning non-self-adjoint operator algebras. We note some examples of finite subdiagonal algebras below to illustrate the scope of the results in this article.

\begin{example}
\label{ex:upper_tri}
Consider $\mathscr{M} = M_n(\mathbb{C})$ equipped with the trace $\tau(X) = \frac{1}{n} (\sum_{i=1}^n X_{ii})$.
\begin{itemize}
\item[(i)] Let $\mathscr{N} = D_n(\mathbb{C})$ and $\Phi$ be the previously described diagonal map onto $D_n(\mathbb{C})$. Then the algebra of upper triangular matrices is a finite subdiagonal algebra with respect to $\Phi$.
\item[(ii)] Let $n_1, \cdots, n_k \in \mathbb{N}$ such that $\sum_{i=1}^k n_i = n$. Embed $M_{n_1}(\mathbb{C}) \oplus \cdots \oplus M_{n_k}(\mathbb{C})$ into $M_n(\mathbb{C})$ as principal diagonal blocks of a $n \times n$ matrix with other entries $0$. In this sense, let $\mathscr{N} = M_{n_1}(\mathbb{C}) \oplus \cdots \oplus M_{n_k}(\mathbb{C})$ and $\Phi : \mathscr{M} \to \mathscr{N}$ be the natural projection. Then the algebra of block upper triangular matrices with respect to $\mathscr{N}$ is a finite subdiagonal algebra with respect to $\Phi$.
\end{itemize}
\end{example}

\begin{example}
\label{ex:hardy}
Let $(\mathbb{T}, \mu)$ denote the unit circle in $\mathbb{C}$ with the uniform probability measure. The Hardy space $H^{\infty}(\mathbb{T}) \subset L^{\infty}(\mathbb{T})$ consists of essentially bounded functions on $\mathbb{T}$ with vanishing (strictly) negative Fourier coefficients. For $f \in L^{\infty}(\mathbb{T})$, let $\Phi(f) = (\int_{\mathbb{T}} f \; \mathrm{d}\mu) I $ and $\tau(f) = \int_{\mathbb{T}} f \; \mathrm{d}\mu$. Note that $\Phi$ is a $\tau$-preserving conditional expectation.
\begin{itemize}
\item[(i)] Then $H^{\infty}(\mathbb{T})$ is a finite subdiagonal subalgebra of $L^{\infty}(\mathbb{T})$ with respect to $\Phi$.
\item[(ii)] Consider $\mathscr{M} = M_n(L^{\infty}(\mathbb{T})) \cong L^{\infty}(\mathbb{T}) \otimes M_n(\mathbb{C})$ and $\mathscr{N} = M_n(H^{\infty}(\mathbb{T})) \cong H^{\infty}(\mathbb{T}) \otimes M_n(\mathbb{C})$ and the natural trace obtained from the tensor product of $\tau$ with the normalized trace on $M_n(\mathbb{C})$. Let $\Psi = \Phi \otimes I$ be the tensor product of $\Phi$ with the identity map on $M_n(\mathbb{C})$. Then $M_n(H^{\infty}(\mathbb{T}))$ is a finite subdiagonal subalgebra of $M_n(L^{\infty}(\mathbb{T}))$ with respect to $\Psi$.
\end{itemize}
\end{example}

An important goal in \cite{arveson_analyticity} was to transplant, apart from the Beurling factorization theorem, Jensen's inequality in $H^{\infty}(\mathbb{T})$ to the more general setting of finite subdiagonal algebras. For a bounded analytic function $f$ on the open unit disc $\mathbb{D}$ in $\mathbb{C}$ ($f \in H^{\infty}(\mathbb{D})$) and $z \in \mathbb{D}$, a version of Jensen's inequality states that $$|f(z)| \le \exp \Big( \frac{1}{2 \pi} \int_{0}^{2\pi} (\log | f(e^{i \theta})|) P_z(\theta) \; d\theta \Big),$$
where $P_z(\cdot)$ denotes the Poisson kernel representing evaluation at point $z$.\ Let $\mathfrak{A}$ denote a finite subdiagonal subalgebra of $\mathscr{M}$ with respect to a $\tau$-preserving conditional expectation $\Phi$. Let $\Delta$ denote the analytic extension of the Fuglede-Kadison determinant (cf.\ \cite{fuglede-kadison}) associated with $\tau$. In \cite[\S 4.4]{arveson_analyticity}, Arveson proposed the following generalization of the above mentioned Jensen's inequality: $$\Delta (\Phi(A)) \le \Delta(A), \textrm{ for every } A \in \mathfrak{A}.$$
In \cite[Theorem 4.4.3]{arveson_analyticity}, he further proved the equivalence of the above inequality to a noncommutative version of a classical result by Szeg\H{o}. Although Arveson proved the inequality in \cite[\S 5]{arveson_analyticity} for many important examples, he described the problem of ascertaining its validity for {\it all} finite subdiagonal algebras as  ``the most exasperating of the open questions about finite subdiagonal algebras ...''(cf.\ \cite[Question 4.4.1]{arveson_analyticity}). After nearly four decades, in 2005, Labuschagne resolved the problem in the affirmative (cf.\ \cite[Theorem 3]{labuschagne}).

For $X \in \mathscr{M}$, let $|X| := (X^*X)^{\frac{1}{2}}$. In this paper, we suitably adapt Labuschagne's strategy and prove a majorization inequality (cf.\ Theorem \ref{thm:jensen_subdiagonal}) which leads to the following stronger result (cf.\ \S $5$).
\vskip 0.07in

\noindent {\bf Theorem 5.1} (Jensen's inequality). \textsl{Let $f$ be an increasing continuous function on $[0, \infty)$ such that $f \circ \exp$ is convex. For $A \in \mathfrak{A}$, we have
$$\tau(f(|\Phi(A)|)) \le \tau(f(|A|)).$$
In addition, if $A$ is invertible in $\mathscr{M}$ and $f \circ \exp$ is strictly convex, equality holds if and only if $\Phi(A) = A$.
}
\vskip 0.07in

We briefly discuss the significance of the above version of Jensen's inequality in matrix analysis and statistics when considered in the context of (block) upper triangular matrices. Matrix factorization results play an important role in efficiently solving the normal equations arising in linear regression to determine the ordinary least squares solution. Two commonly used algorithms are the Cholesky decomposition which decomposes any positive-definite Hermitian matrix as $T^*T$ where $T$ is an invertible upper triangular matrix, and the $QR$ decomposition which decomposes a real (complex, respectively) square matrix into the product of an orthogonal (unitary, respectively) matrix and an upper triangular matrix. Determinant inequalities involving positive-definite matrices (such as the Hadamard-Fischer inequality) and upper triangular matrices are an active area of research in matrix analysis and statistics because of their ubiquity in stability/error estimates. For example, in \cite[Theorem 1]{drury}, a lower bound for the $k^{\textrm{th}}$ compound condition number of a  positive-definite $2 \times 2$ block matrix is obtained in terms of the canonical correlations of the block matrix. One of the lemmas used in the proof is the following determinant inequality for upper triangular matrices.

\begin{lemma}[Drury; {\cite[Lemma 4]{drury}}]
\label{lem:drury}
\textsl{
For an $n \times n$ complex upper triangular matrix $T$, we have $$\det(I_n + T^*T) \ge \prod_{i=1}^n (1 + |t_{ii}|^2),$$
with equality if and only if $T$ is diagonal.
}
\end{lemma}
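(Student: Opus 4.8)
The plan is to read Drury's lemma as the instance of the main theorem above attached to the upper-triangular subdiagonal algebra of Example~\ref{ex:upper_tri}(i), with the specific choice $f(t) = \log(1 + t^2)$. In that setting $\mathscr{M} = M_n(\mathbb{C})$, $\tau(Y) = \frac1n \operatorname{tr}(Y)$, $\mathcal{A}$ is the algebra of upper-triangular matrices, and $\Phi(X) = \mathrm{diag}(x_{11}, \ldots, x_{nn})$. First I would check that $f$ meets the hypotheses: it is continuous and increasing on $[0,\infty)$, and $g(s) := f(e^s) = \log(1 + e^{2s})$ has $g''(s) = 4 e^{2s}/(1+e^{2s})^2 > 0$, so $f \circ \exp$ is strictly convex.

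Next I would convert both traces into determinants. Because $\Phi(X)$ is diagonal, $|\Phi(X)| = \mathrm{diag}(|x_{11}|, \ldots, |x_{nn}|)$, so
$$\tau\big(f(|\Phi(X)|)\big) = \frac1n \sum_{i=1}^n \log\big(1 + |x_{ii}|^2\big) = \frac1n \log \prod_{i=1}^n \big(1 + |x_{ii}|^2\big),$$
while the functional calculus together with $\operatorname{tr}\log = \log\det$ gives
$$\tau\big(f(|X|)\big) = \frac1n \operatorname{tr}\big(\log(I_n + X^*X)\big) = \frac1n \log\det(I_n + X^*X).$$
Multiplying the inequality $\tau(f(|\Phi(X)|)) \le \tau(f(|X|))$ of the main theorem by $n$ and exponentiating yields precisely $\prod_{i=1}^n (1 + |x_{ii}|^2) \le \det(I_n + X^*X)$, with no invertibility hypothesis needed.

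For the equality statement I would first dispose of the invertible case: when $X \in M_n(\mathbb{C})$ is invertible, the strict convexity of $f \circ \exp$ lets me quote the equality clause of the main theorem verbatim, so equality holds iff $\Phi(X) = X$, that is, iff $X$ is diagonal. The step I expect to be the main obstacle is the equality characterization for \emph{singular} $X$, which the main theorem does not reach: its equality clause assumes invertibility, and the naive perturbation $X \mapsto X + \epsilon I_n$ (invertible and non-diagonal for all but finitely many $\epsilon$) only returns the non-strict inequality as $\epsilon \to 0$. To cover this case I would argue directly by induction on $n$ via a Schur-complement reduction: writing $X = \left[\begin{smallmatrix} x_{11} & r^* \\ 0 & Y \end{smallmatrix}\right]$ with $Y$ upper triangular, one computes
$$\det(I_n + X^*X) = (1 + |x_{11}|^2)\,\det\!\Big(I_{n-1} + Y^*Y + \tfrac{1}{1+|x_{11}|^2}\, r r^*\Big).$$
Since $I_{n-1} + Y^*Y > 0$ and $r r^* \ge 0$, the monotonicity of the determinant gives the lower bound $(1+|x_{11}|^2)\det(I_{n-1}+Y^*Y)$, with equality iff $r = 0$ (using that $\det(A + B) = \det A$ for $A > 0$, $B \ge 0$ forces $B = 0$); feeding in the inductive hypothesis for $Y$ then shows equality forces $r = 0$ and $Y$ diagonal, i.e.\ $X$ diagonal. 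This induction is self-contained, re-derives the inequality, and settles the singular case left open by the main theorem.
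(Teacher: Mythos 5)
Your proposal is correct, and for the inequality and the invertible case it follows exactly the route the paper intends: Lemma \ref{lem:drury} is presented there as a special case of Corollary \ref{cor:app},(ii) (equivalently, Theorem \ref{thm:main_ineq} with $f(t)=\log(1+t^2)$) applied to the upper-triangular subdiagonal algebra of Example \ref{ex:upper_tri},(i), using $\Delta(a)=|\det(a)|^{1/n}$ and $\operatorname{tr}\log=\log\det$. Your verification of the hypotheses on $f$ and the translation of the two traces into determinants are both accurate. Where you genuinely add something is the equality case for singular $X$: you are right that the equality clause of Theorem \ref{thm:main_ineq} (and of Corollary \ref{cor:app}) is stated only for $a$ invertible in $\mathscr{M}$, so the paper's reduction does not by itself recover Drury's full ``equality iff diagonal'' statement when some $x_{ii}=0$, and a limiting argument $X+\epsilon I_n$ indeed loses the equality characterization. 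Your Schur-complement computation
$$\det(I_n+X^*X)=(1+|x_{11}|^2)\det\Bigl(I_{n-1}+Y^*Y+\tfrac{1}{1+|x_{11}|^2}\,rr^*\Bigr)$$
is correct, and the fact that $\det(A+B)=\det A$ with $A>0$, $B\ge 0$ forces $B=0$ (diagonalize $A^{-1/2}BA^{-1/2}$) closes the induction cleanly. So your argument is a strict improvement in scope over the paper's one-line derivation: the operator-algebraic route buys the vastly more general noncommutative statement, while your elementary induction is what actually settles the equality case for arbitrary (possibly singular) matrices, which is the form in which Drury's lemma is stated.
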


The reader may consult \cite{can_corr} for a discussion of the relationship of canonical correlations to the inefficiency of the ordinary least squares method when the error terms in a Gauss-Markov linear model may be correlated. A general form of the preceding inequality is discussed in \cite[Theorem 3]{lin} which we state below.

\begin{thm}[Lin; {\cite[Theorem 3]{lin}}]
\label{thm:lin}
\textsl{
Let $T = \begin{bmatrix}
X & Y\\
0 & Z
\end{bmatrix}$ be an $n \times n$ complex matrix, where $X, Z$ are $k \times k, (n-k) \times (n-k)$ blocks, respectively. Then for any $r > 0$,$$\det(I_n + |T|^r) \ge \det (I_k + |X|^r) \cdot \det (I_{n-k} + |Z|^r),$$
with equality if and only if $Y = 0$.
}
\end{thm}

By suitably choosing $f$ in the context of Example \ref{ex:upper_tri}, Theorem \ref{thm:main_ineq} captures the above two determinant inequalities for (block) upper triangular matrices (cf.\ Corollary \ref{cor:app},(ii)). In \cite{nayak_hadamard}, the author discusses determinant inequalities in finite von Neumann algebras involving positive operators and their corresponding diagonals. This article may be considered as a counterpart involving ``subdiagonal operators" and their corresponding diagonals.

Finally as an application of Corollary \ref{cor:app}, (i), we show that for $A \in \mathfrak{A}$ the point spectrum of $A$ is contained in the point spectrum of $\Phi(A)$ (cf.\ Theorem \ref{thm:point_spec}, (ii)), though such a conclusion does not hold in general for their spectra.

\subsection*{Acknowledgments}
I would like to thank Minghua Lin for sharing his results on determinant inequalities involving block upper triangular matrices which primed me towards exploring whether similar results hold in finite subdiagonal algebras.\ I would also like to express my gratitude towards Louis Labuschagne for helpful e-mail correspondence regarding the problem discussed in this article. Lastly I am grateful for the comments of an anonymous referee which helped improve the presentation.

\section{Preliminaries}

In this section, we set up the notation used in the article and discuss some basic results on generalized $s$-numbers which are important for establishing the main results in this article.
\subsection{Notation}
Throughout this article, $\mathscr{M}$ denotes a finite von Neumann algebra with a faithful normal tracial state $\tau$. The identity operator of $\mathscr{M}$ is denoted by $I$. We consider a $\tau$-preserving faithful normal conditional expectation $\Phi$ from $\mathscr{M}$ onto a von Neumann subalgebra $\mathscr{N}$, and a finite subdiagonal subalgebra $\mathfrak{A}$ of $\mathscr{M}$ with respect to $\Phi$. The set of inverses of invertible operators in $\mathfrak{A}$ is denoted by $\mathfrak{A}^{-1}.$ Thus $\mathfrak{A} \cap \mathfrak{A}^{-1}$ contains those invertible operators in $\mathfrak{A}$ whose inverse also lies in $\mathfrak{A}$. 

For an operator $X \in \mathscr{M}$, we define $|X| := (X^*X)^{\frac{1}{2}}$. We denote its spectrum by $\sigma(X)$, its spectral radius by $r(X)$ and its point spectrum (set of eigenvalues) by $\sigma_p(X)$. The projection onto the closure of the range of $X$ {\it i.e.}\ the range projection of $X$, is denoted by $R(X)$. The projection onto the nullspace of $X$ is denoted by $N(X)$. We generally use $E$ to denote projections in $\mathscr{M}$, and $H$ to denote positive operators in $\mathscr{M}$. The set of non-negative real numbers is denoted by $\mathbb{R}_{+}$.

\subsection{Generalized $s$-numbers}

For $X \in \mathscr{M}$, the $t^{\textrm{th}}$ generalized $s$-number is defined as $$\mu_t(X) := \inf \{ \| XE \| : E \textrm{ is a projection in } \mathscr{M}, \tau(I-E) \le t\},\; \textrm{ for } t \ge 0.$$

As $\tau(I) = 1$, note that $\mu_t(X) = 0$ for $t > 1$. Also it is clear that $\mu_0(X) = \|X\|$. We paraphrase some pertinent results about generalized $s$-numbers from \cite{fack} below. The reader may also refer to the exposition in \cite[\S 2]{fack-kosaki}.

\begin{lemma}
\label{lem:s_number}
\textsl{
For $X, Y \in \mathscr{M}$ and $t \in [0, 1]$ and a continuous increasing function $f : \mathbb{R}_{+} \to \mathbb{R}_{+}$, we have:
\begin{itemize}
\item[(i)] $\mu_t(X) \le \|X\|$, and $|\mu_t(X) - \mu_t(Y)| \le \|X-Y\|$,
\item[(ii)] The map $s \in [0, 1] \mapsto \mu_s(X)$ is decreasing and right-continuous,
\item[(iii)] $\mu_t(X) = \mu_t(|X|) = \mu_t(X^*)$ and $\mu_t(\alpha X) = |\alpha| \mu_t(X)$ for $\alpha \in \mathbb{C}$,
\item[(iv)] $\mu_t(X) \le \mu_t(Y)$ if $0 \le X \le Y$,
\item[(v)] $\mu_t(f(|X|)) = f(\mu_t(|X|))$,
\item[(vi)] $\tau(f(|X|)) = \int_{0}^1 f(\mu_s(X)) \; \textrm{d}s$.
\end{itemize}
}
\end{lemma}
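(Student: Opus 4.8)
The plan is to reduce every assertion to properties of the nonincreasing, right-continuous distribution function
\[
d_x(s) := \tau\big(\mathbb{1}_{(s,\infty)}(|x|)\big), \qquad s \ge 0,
\]
and to the identification of $t \mapsto \mu_t(x)$ as its generalized inverse. The central preliminary step, from which everything else flows, is the equivalence
\begin{equation*}
\mu_t(x) = \inf\{ s \ge 0 : d_x(s) \le t \}. \tag{$\ast$}
\end{equation*}
For the inequality ``$\le$'', given $s$ with $d_x(s) \le t$, the spectral projection $p := \mathbb{1}_{[0,s]}(|x|)$ satisfies $\tau(\mathbb{1}-p) = d_x(s) \le t$ and $\||x|p\| \le s$, so $\mu_t(x) \le s$. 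For ``$\ge$'', given any projection $p$ with $\tau(\mathbb{1}-p) \le t$ and $s_0 := \||x|p\|$, the projection $e := \mathbb{1}_{(s_0,\infty)}(|x|)$ satisfies $e \wedge p = 0$ (a common unit vector $\xi$ would satisfy $p\xi=\xi$ and $\||x|\xi\|>s_0$, contradicting $\||x|p\|=s_0$), whence the trace identity $\tau(e) + \tau(p) = \tau(e \vee p) + \tau(e \wedge p) \le \tau(\mathbb{1})$ gives $d_x(s_0) = \tau(e) \le \tau(\mathbb{1}-p) \le t$. Taking infima proves $(\ast)$. Being generalized inverses of one another, $|x|$ and $t \mapsto \mu_t(x)$ are \emph{equimeasurable}: the set $\{t \in [0,1] : \mu_t(x) > s\}$ has Lebesgue measure $d_x(s)$ for every $s \ge 0$.

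With $(\ast)$ and equimeasurability in hand, (i)--(iii) are short. For (i), the choice $p = \mathbb{1}$ gives $\mu_t(x) \le \|x\|$, while $\|xp\| \le \|yp\| + \|x-y\|$ followed by infimization over admissible $p$ yields $\mu_t(x) \le \mu_t(y) + \|x-y\|$, and symmetrically. Part (ii) is immediate: increasing $t$ relaxes the constraint $\tau(\mathbb{1}-p) \le t$, so $\mu$ is nonincreasing, and its right-continuity is inherited from that of $d_x$ via $(\ast)$. For (iii), the computation $\|xp\|^2 = \|p|x|^2 p\| = \||x|p\|^2$ gives $\mu_t(x) = \mu_t(|x|)$; writing the polar decomposition $x = v|x|$ and using $|x^*| = v|x|v^*$ with the traciality of $\tau$ shows $|x|$ and $|x^*|$ share the same distribution function, hence the same $\mu_t$; and $|\alpha x| = |\alpha|\,|x|$ gives the homogeneity.

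Part (iv) reduces through $(\ast)$ to the monotonicity $d_x(s) \le d_y(s)$ whenever $0 \le x \le y$. I would prove this by the spectral comparison $\mathbb{1}_{(s,\infty)}(x) \wedge \mathbb{1}_{[0,s]}(y) = 0$: a common unit vector $\xi$ would satisfy $\langle x\xi,\xi\rangle > s \ge \langle y\xi,\xi\rangle$, contradicting $x \le y$; the trace identity then forces $\tau(\mathbb{1}_{(s,\infty)}(x)) \le \tau(\mathbb{1}_{(s,\infty)}(y))$, i.e.\ $d_x(s) \le d_y(s)$. For (v), with $f$ continuous and increasing, the spectral calculus gives $\mathbb{1}_{(f(s),\infty)}(f(|x|)) = \mathbb{1}_{(s,\infty)}(|x|)$ outside the at most countably many plateaus of $f$, so the distribution function of $f(|x|)$ is $d_{|x|}\circ f^{-1}$; substituting into $(\ast)$ and using the monotonicity and continuity of $f$ yields $\mu_t(f(|x|)) = f(\mu_t(|x|))$. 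Finally, (vi) is equimeasurability in integrated form: the push-forward of Lebesgue measure on $[0,1]$ under $t \mapsto \mu_t(x)$ is precisely the spectral distribution measure $\nu_{|x|}$ of $|x|$, so
\[
\int_0^1 f(\mu_s(x))\,\mathrm{d}s = \int_{[0,\infty)} f \,\mathrm{d}\nu_{|x|} = \tau(f(|x|));
\]
equivalently, one integrates the layer-cake identity $\tau(f(|x|)) = f(0) + \int_0^\infty d_x(s)\,\mathrm{d}f(s)$ by Fubini.

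The main obstacle is the foundational equivalence $(\ast)$: the operator-norm definition of $\mu_t$ must be matched with its description as the generalized inverse of $d_x$, and the harder ``$\ge$'' direction is exactly where the finiteness of $\mathscr{M}$ and the trace property enter, through the projection-lattice relation $e \wedge p = 0$ and the additivity $\tau(e\vee p) + \tau(e\wedge p) = \tau(e) + \tau(p)$. Once $(\ast)$ and equimeasurability are secured, (i)--(vi) are routine distribution-function manipulations; indeed these are the classical facts of Fack and Fack--Kosaki, and the proposal merely records the arguments one reconstructs.
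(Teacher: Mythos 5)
The paper does not prove this lemma at all---it simply cites Fack and Fack--Kosaki---and your reconstruction is precisely the standard argument from those references: establish $\mu_t(x)=\inf\{s\ge 0: d_x(s)\le t\}$ via spectral projections and the trace parallelogram law $\tau(e\vee p)+\tau(e\wedge p)=\tau(e)+\tau(p)$, deduce equimeasurability, and read off (i)--(vi). The argument is correct (the only cosmetic caveat is the endpoint $t=1$, where $\mu_1\equiv 0$ forces $f(0)=0$ in (v); this is harmless for the integral identity (vi) and is present in the cited statements as well), so nothing further is needed.
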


\begin{remark}
\label{rmrk:log_tr}
Let $H$ be a positive operator in $\mathscr{M}$ and $\lambda > 0$. Clearly $\mu_t(\lambda I + H) = \lambda + \mu_t(H)$ for $t \in [0, 1]$. Note that the map $t \in \mathbb{R}_{+} \mapsto \log(\lambda + t)$ is an increasing continuous function. In Lemma \ref{lem:s_number},(vi), the restrictive hypothesis that $f$ be positive-valued is not necessary with the trace in view as one may consider the function $f - f(0)$ and use the fact that $\tau(I) = 1 = \int_{0}^1 \; \textrm{d}s$. Thus $$\tau(\log (\lambda I + H)) = \int_{0}^1 (\log (\mu_s(\lambda I + H)) \; \textrm{d}s,$$
and for an invertible positive operator $H \in \mathscr{M}$, we have $$\tau(\log H) = \int_{0}^1 \log \mu_s(H) \; \textrm{d}s.$$
\end{remark}

\begin{remark}
\label{rmrk:variation}
For $X \in \mathscr{M}$, let $\mathscr{A}$ be any von Neumann subalgebra of $\mathscr{M}$ containing $|X|$. By \cite[Remark 2.3.1]{fack-kosaki}, note that $$\mu_t(X) = \inf \{ \|XE\| :  E \textrm{ is a projection in } \mathscr{A}, \tau(I - E) \le t\},\; \textrm{ for } t \ge 0.$$
\end{remark}

For an operator $X$ in $\mathscr{M}$ and $t \in (0,1]$, we define $$\Sigma_t(X) := \int_{0}^t \mu_s(X) \; \textrm{d}s.$$
Note that $\Sigma_1(X) = \tau(|X|)$. 

\begin{lemma}
\label{lem:cum_sum}
\textsl{
Let $X, Y \in \mathscr{M}$ and $r > 0$. For $t \in [0, 1]$, we have 
\begin{itemize}
\item[(i)] $\Sigma_t(X + Y) \le \Sigma_t(X) + \Sigma_t(Y)$, 
\item[(ii)] $\Sigma_t(|XY|^r) = \Sigma_t(\big| |X| |Y^*| \big|^r)$,
\item[(iii)] $\Sigma_t(|XY|^{\frac{r}{2}}) \le \frac{1}{2} \big( \Sigma_t(|X|^r) + \Sigma_t(|Y|^r) \big)$.
\end{itemize}
}
\end{lemma}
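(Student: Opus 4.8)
The plan is to prove the three assertions one at a time, in each case reducing the claim to a statement about the decreasing rearrangement $s\mapsto\mu_s(\cdot)$ and exploiting the invariance of generalized $s$-numbers under polar decomposition. Throughout I will use freely the basic properties collected in Lemma~\ref{lem:s_number}, together with the standard product and rearrangement estimates for generalized $s$-numbers from \cite{fack,fack-kosaki}.

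For (i), the starting point is the trace-norm (Ky Fan) description $\Sigma_t(x)=\sup\{\tau(\lvert xe\rvert):e\text{ a projection in }\mathscr{M},\ \tau(e)\le t\}$. The inequality ``$\ge$'' comes from taking $e$ to be a spectral projection $\chi_{(\lambda,\infty)}(\lvert x\rvert)$ of suitable trace, for which $\lvert xe\rvert=\lvert x\rvert e$ and $\tau(\lvert x\rvert e)=\Sigma_t(x)$; the inequality ``$\le$'' follows because $xe$ has right support dominated by $e$, so $\mu_s(xe)=0$ for $s\ge\tau(e)$, while $\mu_s(xe)\le\mu_s(x)$. Granting this, the key point is that for a fixed projection $e$ the map $z\mapsto\tau(\lvert ze\rvert)$ is the trace norm of $ze$, hence subadditive: $\tau(\lvert(x+y)e\rvert)\le\tau(\lvert xe\rvert)+\tau(\lvert ye\rvert)$. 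Taking the supremum over admissible $e$ yields $\Sigma_t(x+y)\le\Sigma_t(x)+\Sigma_t(y)$.

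For (ii), I would first record that conjugation by a partial isometry preserves $s$-numbers on the relevant support: if $z\ge0$ and $v$ is a partial isometry with $v^*v\ge\operatorname{supp}(z)$, then $(vzv^*)^n=vz^nv^*$, so the spectral projection of $vzv^*$ above any level $\lambda>0$ equals $v\chi_{(\lambda,\infty)}(z)v^*$, and by traciality $\tau(v\chi_{(\lambda,\infty)}(z)v^*)=\tau(\chi_{(\lambda,\infty)}(z)v^*v)=\tau(\chi_{(\lambda,\infty)}(z))$; thus $vzv^*$ and $z$ have the same distribution and $\mu_s(vzv^*)=\mu_s(z)$. Now take the polar decomposition $y=v\lvert y\rvert$, so that $\lvert y^*\rvert=v\lvert y\rvert v^*$. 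A direct computation gives $\lvert\,\lvert x\rvert\lvert y^*\rvert\,\rvert^2=\lvert y^*\rvert\lvert x\rvert^2\lvert y^*\rvert=v\bigl(y^*\lvert x\rvert^2y\bigr)v^*=v\lvert xy\rvert^2v^*$, and since $\operatorname{supp}(\lvert xy\rvert^2)\le\operatorname{supp}(\lvert y\rvert)=v^*v$, the invariance gives $\mu_s(\lvert xy\rvert^2)=\mu_s(\lvert\,\lvert x\rvert\lvert y^*\rvert\,\rvert^2)$ for every $s$. Two applications of Lemma~\ref{lem:s_number}(v), with $r\mapsto r^{1/2}$ and then $r\mapsto r^{p}$, turn this into $\mu_s(\lvert xy\rvert^{p})=\mu_s(\lvert\,\lvert x\rvert\lvert y^*\rvert\,\rvert^{p})$, and integrating over $[0,t]$ gives (ii).

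For (iii), I would use (ii) to replace $\lvert xy\rvert$ by $\lvert\,\lvert x\rvert\lvert y^*\rvert\,\rvert$, reducing to two positive operators $a:=\lvert x\rvert$ and $b:=\lvert y^*\rvert$; since $\mu_s(\lvert y\rvert)=\mu_s(\lvert y^*\rvert)$ by Lemma~\ref{lem:s_number}(iii), the claim becomes $\int_0^t\mu_s(ab)^{p/2}\,\mathrm ds\le\tfrac12\int_0^t\bigl(\mu_s(a)^p+\mu_s(b)^p\bigr)\,\mathrm ds$. The engine is the weak log-majorization for products, $\int_0^t\log\mu_s(ab)\,\mathrm ds\le\int_0^t\log\bigl(\mu_s(a)\mu_s(b)\bigr)\,\mathrm ds$ for all $t\in[0,1]$, the integrated form of the submultiplicativity $\mu_{s+s'}(ab)\le\mu_s(a)\mu_{s'}(b)$ from \cite{fack-kosaki}. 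Because $r\mapsto r^{p/2}$ is increasing and $u\mapsto e^{pu/2}$ is convex, the standard principle that weak log-majorization is preserved by such functions upgrades this to $\int_0^t\mu_s(ab)^{p/2}\,\mathrm ds\le\int_0^t\bigl(\mu_s(a)\mu_s(b)\bigr)^{p/2}\,\mathrm ds$; a pointwise arithmetic--geometric mean bound $\bigl(\mu_s(a)\mu_s(b)\bigr)^{p/2}\le\tfrac12\bigl(\mu_s(a)^p+\mu_s(b)^p\bigr)$, integrated over $[0,t]$, finishes the proof. Parts (i) and (ii) are routine once the variational formula and the partial-isometry invariance are in place; the main obstacle is the product inequality underlying (iii), namely passing from the submultiplicativity of $\mu$ to the integrated weak log-majorization and then transferring it through the power $r\mapsto r^{p/2}$. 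I would either invoke the continuous majorization machinery of \cite{fack-kosaki} directly or reprove the log-majorization from $\mu_{s+s'}(ab)\le\mu_s(a)\mu_{s'}(b)$ by a rearrangement argument and then apply the convexity principle, taking care with support projections in (ii) and in the reduction so that the partial-isometry conjugations genuinely preserve the $s$-number distribution.
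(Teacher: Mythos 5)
Your proposal is correct in its primary route and, at the crucial point, coincides with the paper: for (iii) the paper applies the pointwise AM--GM bound and then invokes \cite[Theorem 4.3(ii)]{fack} for the function $t\mapsto t^{p/2}$, which is exactly the ``weak log-majorization transferred through an increasing $f$ with $f\circ\exp$ convex'' that you describe, so your first option (cite the majorization machinery of \cite{fack,fack-kosaki}) reproduces the paper's argument; your preliminary reduction to positive operators via (ii) is harmless but unnecessary, since Fack's theorem applies to arbitrary $x,y$. For (i) the paper simply cites \cite[Theorem 3.2]{fack}, whereas you reprove it from the Ky Fan--type variational formula; that works, with one caveat noted below. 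For (ii) the paper's argument is considerably lighter than yours: since $|xy|^2=y^*|x|^2y=\big|\,|x|y\,\big|^2$ and $\mu_t(z)=\mu_t(z^*)$ (Lemma \ref{lem:s_number},(iii)), one gets $\mu_t(|xy|^2)=\mu_t(\big|\,y^*|x|\,\big|^2)$, and repeating the same two steps for the pair $(y^*,|x|)$ yields $\mu_t(\big|\,|x||y^*|\,\big|^2)$ --- no polar decomposition or conjugation-invariance lemma is needed. Your computation $\big|\,|x||y^*|\,\big|^2=v\,|xy|^2\,v^*$ with $y=v|y|$ is correct, including the support condition $\operatorname{supp}(|xy|^2)\le v^*v$ that makes the distribution-preservation argument legitimate; it is just more machinery than the problem requires.

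Two warnings. First, your fallback plan for (iii) would fail: the weak log-majorization $\int_0^t\log\mu_s(ab)\,\mathrm{d}s\le\int_0^t\log\big(\mu_s(a)\mu_s(b)\big)\,\mathrm{d}s$ is \emph{not} the integrated form of the submultiplicativity $\mu_{s+s'}(ab)\le\mu_s(a)\mu_{s'}(b)$, and no rearrangement argument recovers it from that inequality alone, because the indices add on the left and the resulting products pair up wrongly. Already for matrices, $\mu_1(AB)\mu_2(AB)\le\mu_1(A)\mu_1(B)\mu_2(A)\mu_2(B)$ is Horn's inequality, proved via antisymmetric tensor powers (or, in the von Neumann setting of \cite{fack}, via the Fuglede--Kadison determinant), while $\mu_{i+j-1}(AB)\le\mu_i(A)\mu_j(B)$ only gives the strictly weaker bound $\mu_1(A)^2\mu_1(B)\mu_2(B)$. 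So you must cite the result, not ``reprove it from submultiplicativity.'' Second, in (i) the supremum $\sup\{\tau(|xe|):\tau(e)\le t\}$ equals $\Sigma_t(x)$ only when the trace takes all values in $[0,t]$ on subprojections of the relevant spectral atoms; in a non-diffuse algebra (e.g.\ $M_n(\mathbb{C})$ with $t\notin\frac{1}{n}\mathbb{Z}$) the supremum can be strictly smaller. The standard repair --- pass to $\mathscr{M}\otimes L^\infty([0,1];\mathrm{d}s)$, where $\mu_s(x)=\mu_s(x\otimes\mathbb{1})$ --- is exactly the dilation the paper itself employs in the proof of Proposition \ref{prop:maj_phi}, so this is a one-line fix, but it should be said.
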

\begin{proof}
\begin{itemize}
\item[(i)] Restatement of \cite[Theorem 3.2]{fack}.
\item[(ii)] By Lemma \ref{lem:s_number},(iii), we get $\mu_t(|XY|^2) = \mu_t(Y^*|X|^2Y) = \mu_t(\big||X|Y\big|^2) = \mu_t(\big|Y^*|X|\big|^2)$. Reusing the argument for $Y^*, |X|$ we have  $\mu_t(\big|Y^*|X|\big|^2) = \mu_t(\big||X||Y^*|\big|^2)$. Thus for $s \in [0, 1]$, we have $\mu_s(|XY|^2) = \mu_s(\big||X||Y^*|\big|^2) \Rightarrow \mu_s(|XY|^2)^{\frac{r}{2}} = \mu_s(\big||X||Y^*|\big|^2)^{\frac{r}{2}} \Rightarrow \mu_s(|XY|^r) = \mu_s(\big||X||Y^*|\big|^r)$. Taking integrals with respect to $s$, we get the desired equality.
\item[(iii)] 
\begin{align*}
&\phantom{=} \frac{1}{2} \big( \Sigma_t(|X|^r) + \Sigma_t(|Y|^r) \big) &\\
&= \frac{1}{2} \Big( \int_{0}^t \mu_s(|X|^r) \; \textrm{d}s + \int_{0}^t \mu_s(|Y|^r) \; \textrm{d}s \Big) & \\
&= \frac{1}{2} \Big( \int_{0}^t \mu_s(X)^r \; \textrm{d}s + \int_{0}^t \mu_s(Y)^r \; \textrm{d}s \Big) & (\textrm{Lemma \ref{lem:s_number},(v)})\\
&\ge \int_{0}^t (\mu_s(X)\mu_s(Y))^{\frac{r}{2}} \; \textrm{d}s & (\textrm{AM-GM inequality}) \\
&\ge \int_{0}^t \mu_s(XY)^{\frac{r}{2}} \; \textrm{d}s & (\textrm{\cite[Theorem 4.3(ii)]{fack} for }t \mapsto t^{\frac{r}{2}})\\
&= \Sigma_t(|XY|^{\frac{r}{2}}). &
\end{align*}
\end{itemize}
\end{proof}

\begin{lemma}
\label{lem:mon_conv}
\textsl{
Let $(X_n)_{n \in \mathbb{N}} \subset \mathscr{M}$ be a sequence of operators converging uniformly to $X \in \mathscr{M}$. For $t \in [0, 1]$, we have $$\lim_{n \rightarrow \infty} \Sigma_t(X_n) = \Sigma_t(X).$$
}
\end{lemma}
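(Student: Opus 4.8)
The plan is to reduce everything to the Lipschitz-type estimate for generalized $s$-numbers already recorded in Lemma \ref{lem:s_number},(i): for every $s \in [0,1]$ one has $|\mu_s(x_n) - \mu_s(x)| \le \|x_n - x\|$, with a constant that does not depend on $s$. Since $\Sigma_t$ is by definition an integral of the $s$-numbers over $[0,t]$, this bound, being uniform in the integration variable, transfers directly to a bound on $|\Sigma_t(x_n) - \Sigma_t(x)|$.

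Concretely, first I would estimate, using linearity of the integral and the triangle inequality,
$$|\Sigma_t(x_n) - \Sigma_t(x)| = \left| \int_0^t \big( \mu_s(x_n) - \mu_s(x) \big) \,\textrm{d}s \right| \le \int_0^t |\mu_s(x_n) - \mu_s(x)| \,\textrm{d}s.$$
Applying Lemma \ref{lem:s_number},(i) to the integrand then gives $|\mu_s(x_n) - \mu_s(x)| \le \|x_n - x\|$ for each $s$, so that
$$|\Sigma_t(x_n) - \Sigma_t(x)| \le \int_0^t \|x_n - x\| \,\textrm{d}s = t \, \|x_n - x\| \le \|x_n - x\|,$$
where the last inequality uses $t \le 1$. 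Letting $n \to \infty$ and invoking the hypothesis $\|x_n - x\| \to 0$ drives the right-hand side to $0$, which is exactly the assertion.

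The hard part, such as it is, has already been done in Lemma \ref{lem:s_number},(i); the present statement is essentially a corollary. The only point I would flag is that the estimate on the integrand is \emph{uniform} in $s$, so no appeal to dominated convergence or any other interchange-of-limit argument is needed—the single bound $t\,\|x_n - x\|$ collapses the whole integral to one norm estimate. (If one preferred, the same conclusion would follow from Lemma \ref{lem:cum_sum},(i) applied to $x_n = x + (x_n - x)$ together with $\Sigma_t(x_n - x) \le \Sigma_1(x_n - x) = \tau(|x_n - x|) \le \|x_n - x\|$, but the direct integration is cleaner.)
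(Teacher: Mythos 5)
Your proof is correct and is essentially identical to the paper's own argument: both reduce the claim to the uniform bound $|\mu_s(x_n)-\mu_s(x)|\le\|x_n-x\|$ from Lemma \ref{lem:s_number},(i) and integrate to get $|\Sigma_t(x_n)-\Sigma_t(x)|\le t\|x_n-x\|\le\|x_n-x\|$. Nothing further is needed.
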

\begin{proof}
For $n \in \mathbb{N}, t \in [0, 1]$, using Lemma \ref{lem:s_number},(i), we have $$|\Sigma_t(X_n) - \Sigma_t(X)| \le \int_{0}^t |\mu_s(X_n) - \mu_s(X)| \; \textrm{d}s \le t\|X_n - X\| \le \|X_n - X\|.$$
Taking the limit as $n \rightarrow \infty$, we get the desired result.
\end{proof}

\section{A Collection of Useful Lemmas}
In this section, we collect some results that are useful in our discussion in \S \ref{sec:main}, \S \ref{sec:app}. We state some of them without proof citing the appropriate reference in the literature.   

\begin{lemma}[generalized Schwarz inequality]
\label{lem:schwarz_ineq}
\textsl{For an operator $X \in \mathscr{M}$, we have $|\Phi(X)|^2 \le \Phi(|X|^2)$ with equality if and only if $\Phi(X) = X$.
}
\end{lemma}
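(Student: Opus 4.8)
The plan is to prove this as an instance of the Kadison--Schwarz inequality for conditional expectations, obtained from an orthogonal (Pythagoras-type) decomposition of $x$ rather than invoking general $2$-positivity. First I would record the two algebraic properties of $\Phi$ that do the work. The defining identity $\Phi(xy) = x\Phi(y)$ with $y = \mathbb{1}$ gives $\Phi(x) = x$ for $x \in \mathscr{N}$, so $\Phi$ restricts to the identity on $\mathscr{N}$ and in particular $\Phi \circ \Phi = \Phi$. Moreover, being a positive linear map, $\Phi$ is self-adjoint, i.e.\ $\Phi(x^*) = \Phi(x)^*$; combining this with the left-module identity $\Phi(ay) = a\Phi(y)$ for $a \in \mathscr{N}$ yields the companion right-module identity $\Phi(ya) = \Phi(y)a$.

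The central step is to set $y := x - \Phi(x)$. Since $\Phi$ is idempotent, $\Phi(y) = \Phi(x) - \Phi(\Phi(x)) = 0$. Expanding $x = \Phi(x) + y$ gives
$$x^*x = \Phi(x)^*\Phi(x) + \Phi(x)^* y + y^*\Phi(x) + y^*y,$$
and I would apply $\Phi$ term by term. The first summand lies in $\mathscr{N}$ and is therefore fixed by $\Phi$; the two cross terms vanish, since $\Phi(\Phi(x)^* y) = \Phi(x)^*\Phi(y) = 0$ and $\Phi(y^*\Phi(x)) = \Phi(y^*)\Phi(x) = \Phi(y)^*\Phi(x) = 0$, using the left- and right-module identities together with $\Phi(y) = 0$. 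This produces the exact identity
$$\Phi(|x|^2) - |\Phi(x)|^2 = \Phi(y^*y) = \Phi(|y|^2).$$

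The inequality $|\Phi(x)|^2 \le \Phi(|x|^2)$ is then immediate, because $y^*y \ge 0$ and $\Phi$ is positive. For the equality case, equality forces $\Phi(y^*y) = 0$; since $\Phi$ is $\tau$-preserving, $\tau(y^*y) = \tau(\Phi(y^*y)) = 0$, and faithfulness of $\tau$ gives $y^*y = 0$, hence $y = 0$ and $\Phi(x) = x$. The converse is clear: if $\Phi(x) = x$ then $x \in \mathscr{N}$, so $x^*x \in \mathscr{N}$ is fixed by $\Phi$ and both sides equal $x^*x$.

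The argument is short and essentially standard, so I do not anticipate a serious obstacle. The only points needing care are deriving the right-module identity, which relies on the self-adjointness of $\Phi$ so that \emph{both} cross terms cancel, and routing the equality condition through faithfulness of the trace $\tau$ rather than attempting to argue directly from positivity of $\Phi$ alone.
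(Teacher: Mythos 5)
Your proposal is correct and is essentially the paper's own argument: the paper expands $0 \le (\Phi(x)-x)^*(\Phi(x)-x)$ and applies $\Phi$, which is the same computation as your decomposition $x = \Phi(x) + y$ with $y = x - \Phi(x)$, and the cross-term cancellations are identical. The only cosmetic difference is in the equality case, where the paper invokes faithfulness of $\Phi$ directly while you reroute through $\tau$-preservation and faithfulness of $\tau$; these are equivalent here, since that is precisely why a $\tau$-preserving conditional expectation is faithful.
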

\begin{proof}
Clearly $0 \le (\Phi(X) - X)^*(\Phi(X) - X) = \Phi(X)^*\Phi(X) - X^* \Phi(X) - \Phi(X)^*X + X^*X.$ As $\Phi(X), \Phi(X)^* \in \mathscr{N}$ and $\Phi$ is a faithful positive map (being a $\tau$-preserving conditional expectation), we have $$0 \le \Phi \big( \Phi(X)^*\Phi(X) - X^* \Phi(X) - \Phi(X)^*X + X^*X \big) = \Phi(X^*X) - \Phi(X)^* \Phi(X),$$ with equality if and only if $(\Phi(X)- X)^* (\Phi(X) - X) = 0 \Longleftrightarrow \Phi(X) = X.$
\end{proof}

\begin{lemma}
\label{lem:trace_ineq}
\textsl{
For $X, Y$ in $\mathscr{M}$ such that $0 \le X \le Y$ and a strictly increasing continuous function $f$ on $\mathbb{R}_{+}$, we have $\tau(f(X)) \le \tau(f(Y))$ with equality if and only if $X = Y$.
}
\end{lemma}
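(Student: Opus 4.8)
The plan is to reduce everything to the representation of the trace in terms of generalized $s$-numbers, since a strictly increasing scalar function $f$ need not be operator monotone, and hence one cannot argue at the level of operators that $f(x) \le f(y)$. The $s$-number formalism bypasses this entirely.

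First I would establish the inequality. Since $0 \le x \le y$, Lemma \ref{lem:s_number},(iv) gives $\mu_s(x) \le \mu_s(y)$ for every $s \in [0,1]$, and because $f$ is increasing, $f(\mu_s(x)) \le f(\mu_s(y))$ for all such $s$. As $x, y \ge 0$ we have $|x| = x$ and $|y| = y$, so applying Lemma \ref{lem:s_number},(vi) (extended to the present real-valued, not necessarily non-negative, $f$ by subtracting $f(0)$ exactly as in Remark \ref{rmrk:log_tr}, using $\tau(\mathbb{1}) = 1 = \int_0^1 \mathrm{d}s$) yields
\[
\tau(f(x)) = \int_0^1 f(\mu_s(x)) \, \mathrm{d}s \le \int_0^1 f(\mu_s(y)) \, \mathrm{d}s = \tau(f(y)).
\]

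For the equivalence, the reverse implication is immediate. Conversely, assume $\tau(f(x)) = \tau(f(y))$. Then $\int_0^1 \big( f(\mu_s(y)) - f(\mu_s(x)) \big) \, \mathrm{d}s = 0$, and since the integrand is non-negative by the previous step, $f(\mu_s(x)) = f(\mu_s(y))$ for almost every $s \in [0,1]$. Here the strict monotonicity of $f$ becomes essential: it forces $\mu_s(x) = \mu_s(y)$ for almost every $s$. To close the argument I would recover the order relation rather than attempt to reconstruct the operators from their $s$-numbers: applying Lemma \ref{lem:s_number},(vi) with the identity function gives $\tau(x) = \int_0^1 \mu_s(x)\,\mathrm{d}s = \int_0^1 \mu_s(y)\,\mathrm{d}s = \tau(y)$, so that $\tau(y - x) = 0$; since $y - x \ge 0$ and $\tau$ is faithful, this yields $y - x = 0$, i.e.\ $x = y$.

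The conceptually delicate point, and the reason the $s$-number machinery is doing real work, is that agreement of the $s$-number functions $\mu_s(x)$ and $\mu_s(y)$ does \emph{not} by itself force $x = y$, since distinct positive operators can be ``isospectral'' in this distributional sense. What rescues the argument is that we are not in full generality: the hypothesis $x \le y$ converts equality of $s$-numbers into the genuine scalar equality $\tau(x) = \tau(y)$, after which the order relation together with the faithfulness of $\tau$ finishes the proof. I would therefore expect the only genuine care to lie in justifying the passage from the vanishing of a non-negative integral to an almost-everywhere identity, and in invoking Remark \ref{rmrk:log_tr} to license the trace formula for an $f$ that may take negative values.
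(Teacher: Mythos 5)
Your proposal is correct and follows essentially the same route as the paper: bound $\mu_s(x)\le\mu_s(y)$ via Lemma \ref{lem:s_number},(iv), apply $f$ and the integral representation of the trace, and in the equality case deduce $\mu_s(x)=\mu_s(y)$ from strict monotonicity, whence $\tau(x)=\tau(y)$ and $x=y$ by faithfulness. The only cosmetic difference is that you pass through almost-everywhere equality of the $s$-number functions (which suffices for the integral identity), whereas the paper invokes right-continuity of $t\mapsto\mu_t(\cdot)$ to upgrade to equality for every $t$; both are fine.
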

\begin{proof}
As $f$ is increasing, using Lemma \ref{lem:s_number},(iv), we have $f(\mu_t(X)) \le f(\mu_t (Y))$ for $t \in [0, 1]$. Using Lemma \ref{lem:s_number}, (vi), we have $\tau(f(X)) = \int_{0}^1 f(\mu_s(X)) \; \textrm{d}s \le \int_{0}^1 f(\mu_s(Y)) \; dy = \tau(f(Y))$.

Let us assume that $\tau(f(X)) = \tau(f(Y))$. Using the right-continuity of the maps $t \in [0, 1]   \mapsto \mu_t(X)$ and $\mu_t(Y)$, we conclude that $f(\mu_t(X)) = f(\mu_t(Y))$ for all $t \in [0, 1]$. As $f$ is strictly increasing, it is a one-to-one function and hence $\mu_t(X) = \mu_t(Y)$ for all $t \in [0, 1]$. Thus $\tau(X) = \int_{0}^1 \mu_s(X) \; \textrm{d}s = \int_{0}^1 \mu_s(Y) \; \textrm{d}s = \tau(Y)$ and by the faithfulness of $\tau$, we have $\tau(Y - X) = 0 \Rightarrow X = Y$. If $X = Y$, equality holds trivially.
\end{proof}

\begin{lemma}[Hardy-Littlewood-P\'{o}lya]
\label{lem:hlp_maj}
\textsl{
Let $\varphi, \psi : [0, 1] \to \mathbb{R}$ be decreasing functions such that $$\int_{0}^t \varphi(s) \; \textrm{d}s \le \int_{0}^t \psi(s) \; \textrm{d}s \textrm{ for } t \in [0, 1), \textrm{ and } \int_{0}^1 \varphi(s) \; \textrm{d}s = \int_{0}^1 \psi(s) \; \textrm{d}s.$$
Then for a continuous convex function $f : \mathbb{R} \to \mathbb{R}$, we have 
\begin{equation}
\label{eqn:hlp_ineq}
\int_{0}^1 f(\varphi(s)) \; \textrm{d}s \le \int_{0}^1 f(\psi(s)) \; \textrm{d}s.
\end{equation}
If $f$ is strictly convex, equality holds in (\ref{eqn:hlp_ineq}) if and only if $\varphi = \psi$ almost everywhere.
}
\end{lemma}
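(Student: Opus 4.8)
The plan is to reduce the statement to the supporting-line characterization of convexity together with a single integration by parts, carried out in the Riemann--Stieltjes sense so as to accommodate functions that are only monotone. First I would record the hypotheses in primitive form: setting
$$H(t) := \int_0^t \big( \psi(s) - \varphi(s) \big) \; \textrm{d}s,$$
the assumptions say exactly that $H(t) \ge 0$ for all $t \in [0,1]$ and that $H(0) = H(1) = 0$ (the latter using the equality of the full integrals).

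Next I would fix a subgradient selection. Since $f$ is convex on $\mathbb{R}$ it is locally Lipschitz and its right derivative $f'_+$ exists everywhere and is nondecreasing. Put $p(s) := f'_+(\varphi(s))$; because $f'_+$ is nondecreasing and $\varphi$ is decreasing, the composite $p$ is a decreasing function on $[0,1]$. The supporting-line inequality $f(b) - f(a) \ge f'_+(a)(b-a)$, applied with $a = \varphi(s)$ and $b = \psi(s)$, gives pointwise
$$f(\psi(s)) - f(\varphi(s)) \ge p(s) \big( \psi(s) - \varphi(s) \big).$$
Integrating over $[0,1]$ and then integrating by parts against $\textrm{d}H = (\psi - \varphi)\,\textrm{d}s$, the boundary term $[pH]_0^1$ vanishes because $H(0) = H(1) = 0$, leaving
$$\int_0^1 \big( f(\psi) - f(\varphi) \big) \; \textrm{d}s \ge \int_0^1 p \; \textrm{d}H = - \int_0^1 H(s) \; \textrm{d}p(s).$$
As $p$ is decreasing, $-\textrm{d}p$ is a nonnegative measure, and since $H \ge 0$ the right-hand side is nonnegative; this is precisely (\ref{eqn:hlp_ineq}).

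For the equality case under strict convexity I would trace back which inequalities become tight. Equality in (\ref{eqn:hlp_ineq}) forces $\int_0^1 \big( f(\psi) - f(\varphi) - p \cdot (\psi - \varphi) \big)\,\textrm{d}s = 0$, and since the integrand is nonnegative almost everywhere it must vanish almost everywhere. When $f$ is strictly convex the supporting-line inequality is strict whenever $b \ne a$, so vanishing of the integrand at $s$ forces $\psi(s) = \varphi(s)$; hence $\varphi = \psi$ almost everywhere. The converse implication is immediate.

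The step I expect to be the main obstacle is the integration by parts when $\varphi$ or $\psi$ fails to be bounded --- a genuine concern here, since in the intended application these functions are logarithms of $s$-numbers and may tend to $-\infty$ near $s = 1$, making $p$ unbounded. The boundary terms still vanish thanks to $H(0) = H(1) = 0$, but justifying the Riemann--Stieltjes manipulation cleanly will likely require truncating $f$ (for instance replacing it by $\max(f, f(-N))$) or restricting to $[\delta, 1-\delta]$ and passing to the limit, controlling the resulting integrals with monotone/dominated convergence of the type underlying Lemma \ref{lem:mon_conv}.
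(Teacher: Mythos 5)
Your argument is correct, but note that the paper offers no proof of Lemma \ref{lem:hlp_maj} at all: it is stated in the ``Collection of Useful Lemmas'' as the classical Hardy--Littlewood--P\'{o}lya majorization theorem, so there is no in-paper proof to compare against. What you give is the standard tangent-line-plus-Abel-summation proof of that classical result: the supporting-line inequality $f(b)\ge f(a)+f'_+(a)(b-a)$ with $a=\varphi(s)$, $b=\psi(s)$, followed by integration by parts of $\int_0^1 p\,\mathrm{d}H$ against the nonnegative primitive $H$, using that $p=f'_+\circ\varphi$ is decreasing so that $-\mathrm{d}p$ is a nonnegative measure. The equality analysis is also right: the chain $0=\int(f(\psi)-f(\varphi))\ge\int p\,(\psi-\varphi)\ge 0$ forces the pointwise supporting-line gap to vanish a.e., and strict convexity then gives $\varphi=\psi$ a.e. The one caveat you raise --- unboundedness of $p$ near the endpoints --- is a legitimate issue for the lemma in the stated generality and does require a truncation or a $[\delta,1-\delta]$ limiting argument as you describe; but it is worth observing that it never arises where the paper actually invokes the lemma. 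In Theorem \ref{thm:main_ineq} the lemma is applied only to $\varphi(s)=\log\mu_s(|\Phi(a)|)$ and $\psi(s)=\log\mu_s(|a|)$ for $a\in\mathcal{A}\cap\mathcal{A}^{-1}$; there $\Phi(a)$ is invertible (since $\Phi(a)\Phi(a^{-1})=\Phi(aa^{-1})=\mathbb{1}$), so both generalized $s$-number functions are bounded away from $0$ and $\infty$, $p$ is bounded, and your Riemann--Stieltjes manipulation goes through with no limiting argument needed.
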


\begin{lemma}[{\cite[Lemma 2]{labuschagne}}]
\label{lem:sqrt_labuschagne}
\textsl{
Consider an invertible positive operator $H \in \mathscr{M}$ and inductively define $H_1 := H$ and $H_{n+1} := \frac{1}{2}(H_n + H H_n^{-1})$ for $n \in \mathbb{N}$. Then $(H_n)_{n \in \mathbb{N} }$ is a decreasing sequence of invertible positive operators in $\mathscr{M}$ converging uniformly to $\sqrt{H}$. 
}
\end{lemma}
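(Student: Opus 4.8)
The plan is to collapse the operator statement to a scalar one about the Babylonian square-root iteration by means of the continuous functional calculus. First I would verify by induction that every $x_n$ lies in the abelian $C^*$-subalgebra $\mathscr{C} := C^*(\mathbb{1}, x) \subseteq \mathscr{M}$ and is positive and invertible. The base case $x_1 = x$ is given. For the inductive step, if $x_n \in \mathscr{C}$ is positive and invertible then $x_n^{-1} \in \mathscr{C}$, and since $x$ and $x_n^{-1}$ commute and are positive, $x x_n^{-1} \ge 0$; hence $x_{n+1} = \frac{1}{2}(x_n + x x_n^{-1})$ is a positive element of $\mathscr{C}$. Applying the arithmetic–geometric mean inequality inside the commutative algebra $\mathscr{C}$ gives $x_{n+1} \ge (x_n \cdot x x_n^{-1})^{1/2} = \sqrt{x} \ge \sqrt{\delta}\,\mathbb{1}$, where $\delta := \|x^{-1}\|^{-1} > 0$, so $x_{n+1}$ is invertible and the induction closes. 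In particular all the $x_n$ commute with one another and with $x$. By Gelfand's theorem $\mathscr{C} \cong C(\sigma(x))$ with $\sigma(x) \subset [\delta, \|x\|]$, and under this isometric isomorphism $x$ corresponds to the coordinate function $\lambda \mapsto \lambda$ while $x_n$ corresponds to the function $g_n$ defined by the pointwise recursion $g_1(\lambda) = \lambda$, $g_{n+1}(\lambda) = \frac{1}{2}\big(g_n(\lambda) + \lambda/g_n(\lambda)\big)$.

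Next I would analyze this scalar recursion for each fixed $\lambda \in [\delta, \|x\|]$. Rewriting the defining relation as $g_{n+1} - \sqrt{\lambda} = \frac{(g_n - \sqrt{\lambda})^2}{2 g_n}$ shows immediately that $g_n(\lambda) \ge \sqrt{\lambda}$ for every $n \ge 2$, so the iterates dominate $\sqrt{x}$. Moreover $g_{n+1} - g_n = \frac{\lambda - g_n^2}{2 g_n} = \frac{(\sqrt{\lambda} - g_n)(\sqrt{\lambda} + g_n)}{2 g_n} \le 0$ whenever $g_n \ge \sqrt{\lambda}$, so $(g_n(\lambda))_n$ is decreasing for $n \ge 2$; being bounded below by $\sqrt{\delta}$ it converges to some $L(\lambda)$, and passing to the limit in the recursion forces $L = \frac{1}{2}(L + \lambda/L)$, i.e. $L(\lambda) = \sqrt{\lambda}$. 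Note that the monotonicity already holds from $n = 1$ precisely where $g_1(\lambda) = \lambda \ge \sqrt{\lambda}$, that is on the part of $\sigma(x)$ lying in $[1, \infty)$; the decreasing claim for the entire sequence is thus the case $x \ge \mathbb{1}$, whereas in general the first step may overshoot above $\sqrt{x}$ before the sequence settles into monotone decrease.

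Finally I would upgrade the pointwise scalar convergence to uniform norm convergence of operators. The functions $g_n$ are continuous on the compact set $\sigma(x)$ and, for $n \ge 2$, decrease pointwise to the continuous limit $\lambda \mapsto \sqrt{\lambda}$; by Dini's theorem the convergence $g_n \to \sqrt{\,\cdot\,}$ is uniform on $\sigma(x)$. Since the identification $\mathscr{C} \cong C(\sigma(x))$ is isometric, this gives $\|x_n - \sqrt{x}\| = \sup_{\lambda \in \sigma(x)} |g_n(\lambda) - \sqrt{\lambda}| \to 0$, which is exactly the asserted uniform convergence of $(x_n)$ to $\sqrt{x}$.

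I expect the only genuine obstacle to be this passage from pointwise to uniform convergence, and it is here that the strict positivity of the lower spectral bound $\delta$ is essential: it keeps $\sqrt{\,\cdot\,}$ continuous and the iterates bounded away from $0$, so that the hypotheses of Dini's theorem—a monotone sequence of continuous functions converging to a continuous limit on a compact set—are satisfied. If one prefers to avoid Dini, the same quadratic identity yields the explicit estimate $0 \le g_{n+1} - \sqrt{\lambda} = \frac{(g_n - \sqrt{\lambda})^2}{2 g_n} \le \frac{(g_n - \sqrt{\lambda})^2}{2\sqrt{\delta}}$ valid uniformly on $[\delta, \|x\|]$, which furnishes an explicit uniform rate; either route delivers the norm convergence $x_n \to \sqrt{x}$.
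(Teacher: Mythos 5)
The paper does not prove this lemma at all --- it is quoted verbatim from Labuschagne (\cite[Lemma 2]{labuschagne}) and listed among the results ``stated without proof,'' so there is nothing internal to compare against; your argument is the standard one (reduce to the abelian algebra $C^*(\mathbb{1},x)\cong C(\sigma(x))$, analyze the Heron/Newton scalar iteration, upgrade to uniform convergence by Dini), and it is correct. Your most valuable observation is the caveat you flag explicitly: as literally stated the ``decreasing'' claim fails at the first step unless $x\ge\mathbb{1}$, since $x_2=\tfrac{1}{2}(x+\mathbb{1})$, so e.g.\ $x=\tfrac14\mathbb{1}$ gives $x_2=\tfrac58\mathbb{1}>x_1$; the sequence is only guaranteed decreasing from $n=2$ on. This does not damage the paper, because in Proposition \ref{prop:labuschagne_sqrt} only the uniform convergence (via Lemma \ref{lem:mon_conv}) is actually used, but it is a genuine imprecision in the statement as transcribed. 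One small quibble with your Dini-free alternative: the quadratic bound $e_{n+1}\le e_n^2/(2\sqrt{\delta})$ by itself yields a rate only after the error drops below $2\sqrt{\delta}$, which need not happen uniformly in $\lambda$ from that inequality alone; you should pair it with the linear contraction $e_{n+1}=\frac{e_n}{2g_n}\,e_n\le\tfrac12 e_n$ (valid for $n\ge 2$) to get a genuinely uniform geometric rate. The Dini route you lead with is airtight as written.
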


\begin{lemma}[Arveson-Beurling factorization theorem]
\label{lem:arveson_fac}
\textsl{
\begin{itemize}
\item[(i)] (\cite[Theorem 4.2.1]{arveson_analyticity}) Every invertible operator in $\mathscr{M}$ admits a factorization $UA$, where $U$ is a unitary operator in $\mathscr{M}$ and $A \in \mathfrak{A} \cap \mathfrak{A}^{-1}$. In particular, an operator $A \in \mathfrak{A}$ which is invertible in $\mathscr{M}$ admits a factorization $U \tilde{A}$ where $U \in \mathfrak{A}$ and $\tilde{A} \in \mathfrak{A} \cap \mathfrak{A}^{-1}$.
\item[(ii)] (\cite[Corollary 4.2.4(ii)]{arveson_analyticity}) Every invertible positive operator in $\mathscr{M}$ is of the form $A^*A$ for some invertible operator $A \in \mathfrak{A} \cap \mathfrak{A}^{-1}.$
\end{itemize}
}
\end{lemma}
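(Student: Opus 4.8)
The plan is to isolate a single factorization statement --- that every positive invertible $g \in \mathscr{M}$ can be written $g = wz$ with $w$ unitary and $z \in \mathcal{A}\cap\mathcal{A}^{-1}$ --- and to deduce both (i) and (ii) from it; the substance of the theorem then lies entirely in proving this statement by a noncommutative outer factorization carried out in $L^2(\mathscr{M},\tau)$.

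First I would record the elementary reductions. Granting this factorization, for (ii) I apply it to $g = h^{1/2}$ (which is positive and invertible whenever $h$ is), obtaining $h^{1/2} = wz$ and hence $h = z^* w^* w z = z^* z$ with $z\in\mathcal{A}\cap\mathcal{A}^{-1}$. For (i), given invertible $T$, the polar decomposition $T = V|T|$ has $V$ an isometry (because $|T|$ is invertible), and since $\mathscr{M}$ is finite every isometry is a unitary; applying the factorization to $g = |T|$ yields $|T| = wz$ and so $T = (Vw)z$ with $Vw$ unitary and $z\in\mathcal{A}\cap\mathcal{A}^{-1}$. For the ``in particular'' clause, if $T = a \in \mathcal{A}$ then the unitary factor $u = a z^{-1}$ automatically lies in $\mathcal{A}$, because $z^{-1}\in\mathcal{A}$.

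The heart of the argument is the factorization $g = wz$ itself. I would represent $\mathscr{M}$ on its GNS space $L^2(\mathscr{M},\tau)$, write $H^2$ for the $L^2$-closure of $\mathcal{A}$, and consider the closed subspace $\mathfrak{M} := \overline{g\mathcal{A}}^{\,L^2}$, which is invariant under right multiplication by $\mathcal{A}$. Because $g$ is bounded and bounded below, $\mathfrak{M}$ is a full, simply invariant subspace, and the noncommutative Beurling--Lax--Halmos theorem for finite subdiagonal algebras then provides a unitary $w\in\mathscr{M}$ with $\mathfrak{M} = wH^2 = \overline{w\mathcal{A}}^{\,L^2}$. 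Since $g\in\mathfrak{M}$, the element $z := w^* g$ lies in $H^2$; it is bounded, so by the identity $\mathscr{M}\cap H^2 = \mathcal{A}$ (a consequence of the weak-$*$ closedness of $\mathcal{A}$) we get $z\in\mathcal{A}$ and $g = wz$, with $z$ invertible in $\mathscr{M}$ and $z^{-1} = g^{-1}w$ bounded. For outerness I would note that $\overline{z\mathcal{A}}^{\,L^2} = w^*\,\overline{g\mathcal{A}}^{\,L^2} = w^*(wH^2) = H^2 \ni \mathbb{1}$, so there exist $a_n\in\mathcal{A}$ with $z a_n\to\mathbb{1}$ in $L^2$; multiplying by the bounded operator $z^{-1}$ gives $a_n\to z^{-1}$ in $L^2$, whence $z^{-1}\in\mathscr{M}\cap H^2 = \mathcal{A}$ and $z\in\mathcal{A}\cap\mathcal{A}^{-1}$.

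The hard part is exactly the input I have quarantined into the Beurling--Lax--Halmos step: proving that a full, simply invariant subspace of $L^2(\mathscr{M},\tau)$ has the form $wH^2$ for a genuine unitary $w$ (and not merely a partial isometry), which is where the invertibility of $g$ and the finiteness of $\mathscr{M}$ must be exploited, together with the auxiliary identity $\mathscr{M}\cap H^2 = \mathcal{A}$. These rest on the full strength of Arveson's structure theory for subdiagonal algebras --- the weak-$*$ Dirichlet property and the attendant generalized F.\ and M.\ Riesz / Szeg\H{o} apparatus --- and it is precisely this package that I would import wholesale from \cite{arveson_analyticity} rather than rebuild; everything else above is routine operator algebra.
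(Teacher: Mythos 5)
The paper itself contains no proof of this lemma: it is imported verbatim from \cite{arveson_analyticity} (Theorem 4.2.1 and Corollary 4.2.4(ii)), so the comparison is really with Arveson's original argument. Your outer reductions are correct and essentially match his: deducing (ii) by applying the unitary factorization to $h^{1/2}$; deducing (i) via polar decomposition (where $V$ is automatically unitary simply because $T$ is invertible, so finiteness of $\mathscr{M}$ need not be invoked); and observing $u = a\tilde{a}^{-1} \in \mathcal{A}$ for the ``in particular'' clause. The $L^2$ bookkeeping is also sound: $z = w^*g$ is bounded and lies in $H^2$, so $z \in \mathscr{M}\cap H^2 = \mathcal{A}$, and your approximation argument correctly places $z^{-1}$ in $\mathcal{A}$.

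The genuine gap is in the step you quarantined, and it is threefold. First, there is no noncommutative Beurling--Lax--Halmos theorem in \cite{arveson_analyticity} to import: what Arveson proves at that point \emph{is} Theorem 4.2.1, the statement under discussion, so ``importing the package wholesale'' from that paper amounts to citing the lemma rather than proving it. Second, the general statement you invoke is false: a full, simply invariant right-$\mathcal{A}$-invariant subspace of $L^2(\mathscr{M},\tau)$ need not equal $wH^2$ for a unitary $w$. For instance, take $\mathscr{M} = L^{\infty}(\mathbb{T})\oplus L^{\infty}(\mathbb{T})$ with $\mathcal{A} = H^{\infty}(\mathbb{T})\oplus H^{\infty}(\mathbb{T})$ and $K = H^2(\mathbb{T})\oplus L^2(\mathbb{T})$: this $K$ is full and simply invariant but is not of the form $wH^2$, and indeed the correct Beurling theorem in this setting (Blecher--Labuschagne, 2008) only yields decompositions into partial-isometry translates $\bigoplus_i u_iH^2$ plus a reducing summand. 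To extract a single unitary one must use precisely that $\mathfrak{M} = gH^2$ with $g$ and $g^{-1}$ bounded --- which is the content of Arveson's factorization lemma itself, so nothing has actually been reduced. Third, even if you were willing to invoke the 2008 Beurling theorem as a black box, the argument would be circular within the literature's dependency graph: its proof rests on Labuschagne's Szeg\H{o} theorem \cite{labuschagne}, whose proof (mirrored in Proposition \ref{prop:labuschagne_sqrt} of this paper) itself consumes Arveson's factorization theorem. The viable options are to cite \cite[Theorem 4.2.1]{arveson_analyticity} as the paper does, or to reproduce Arveson's direct argument that $\overline{g\mathcal{A}}$ is a unitary translate of $H^2$ for positive invertible $g$; that direct argument is the actual mathematical work here, and your proposal leaves it unproved.
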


\section{The Main Majorization Inequality}
\label{sec:main}

\begin{prop}
\label{prop:maj_phi}
\textsl{
For a positive operator $H$ in $\mathscr{M}$ and $t \in [0, 1]$, we have $$\Sigma_t(\Phi(H)) \le \Sigma_t(H).$$
}
\end{prop}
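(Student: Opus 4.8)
The plan is to reduce the statement to a Ky Fan--type variational description of the partial sums $\Sigma_t$. Concretely, for a positive operator $y \in \mathscr{M}$ and $t \in [0,1]$ I would use the identity
\begin{equation*}
\Sigma_t(y) = \max\{\tau(yb) : b \in \mathscr{M},\ 0 \le b \le \mathbb{1},\ \tau(b) \le t\}.
\end{equation*}
It is essential here to allow the ``fuzzy projections'' $0 \le b \le \mathbb{1}$ rather than genuine projections, since $\mathscr{M}$ need not contain projections of every trace value in $[0,1]$ (for instance when $\mathscr{M}$ is a matrix algebra with its normalized trace). Granting this formula, the inequality $\Sigma_t(\Phi(x)) \le \Sigma_t(x)$ is exactly the weak submajorization of $x$ by $\Phi(x)$, and it will follow from two soft properties of $\Phi$.

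The first property is that $\Phi$ is self-adjoint for the trace pairing: for all $x, b \in \mathscr{M}$,
\begin{equation*}
\tau(\Phi(x) b) = \tau(\Phi(\Phi(x) b)) = \tau(\Phi(x)\Phi(b)) = \tau(\Phi(x\Phi(b))) = \tau(x\Phi(b)),
\end{equation*}
where I use $\tau\circ\Phi = \tau$ together with the module identities $\Phi(nb)=n\Phi(b)$ and $\Phi(xn)=\Phi(x)n$ for $n\in\mathscr{N}$ (applied with $n=\Phi(x)$ and $n=\Phi(b)$ respectively). The second is that, being positive, unital and trace-preserving, $\Phi$ maps the feasible set $\{0\le b\le\mathbb{1},\ \tau(b)\le t\}$ into itself, since $0\le\Phi(b)\le\mathbb{1}$ and $\tau(\Phi(b))=\tau(b)$. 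Combining these with the variational formula,
\begin{equation*}
\Sigma_t(\Phi(x)) = \max_{b} \tau(\Phi(x)b) = \max_{b}\tau(x\Phi(b)) \le \max_{b'}\tau(x b') = \Sigma_t(x),
\end{equation*}
the suprema running over the feasible set and the middle inequality holding because each $\Phi(b)$ is again feasible.

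The main work --- and the step I expect to be the real obstacle --- is the variational formula itself, which I would establish in two halves. For the bound $\tau(yb) \le \Sigma_t(y)$ over all feasible $b$, I would write $\tau(yb) = \tau(y^{1/2}by^{1/2}) \ge 0$ and invoke the trace inequality $\tau(yb) \le \int_0^1 \mu_s(y)\mu_s(b)\,\mathrm{d}s$ (a consequence of \cite{fack}); since $s\mapsto\mu_s(y)$ is decreasing while $0 \le \mu_s(b) \le 1$ with $\int_0^1 \mu_s(b)\,\mathrm{d}s = \tau(b) \le t$ (Lemma~\ref{lem:s_number},(vi)), a one-line rearrangement against the decreasing weight gives $\int_0^1 \mu_s(y)\mu_s(b)\,\mathrm{d}s \le \int_0^t \mu_s(y)\,\mathrm{d}s = \Sigma_t(y)$. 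For attainment I would take $b$ to be the spectral projection of $y$ for $(\mu_t(y),\infty)$, augmented by a suitable scalar multiple of the spectral projection at the level $\mu_t(y)$ so as to arrange $\tau(b)=t$; Lemma~\ref{lem:s_number},(vi) then yields $\tau(yb)=\int_0^t\mu_s(y)\,\mathrm{d}s$. This upper-bound half is where the genuine inequality content of the proposition resides, the self-adjointness and feasibility-invariance of $\Phi$ being purely formal.
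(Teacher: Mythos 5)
Your argument is correct, and it follows the same overall strategy as the paper --- a variational description of $\Sigma_t$ as a supremum of traces, combined with the trace--bimodule identities of $\Phi$ --- but the technical route is genuinely different in a way worth noting. The paper invokes the Fack--Kosaki variational formula over \emph{projections}: $\Sigma_t(\Phi(x))$ is computed as a supremum of $\tau(\Phi(x)p)$ over projections $p$ in $\mathscr{N}$ with $\tau(p)\le t$, each of which is also a projection in $\mathscr{M}$, so $\tau(\Phi(x)p)=\tau(xp)\le\Sigma_t(x)$. That formula requires $\mathscr{M}$ and $\mathscr{N}$ to have no minimal projections, which the paper arranges by tensoring with $L^{\infty}([0,1])$ and appealing to Remark \ref{rmrk:variation}. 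You instead use the Ky Fan formula over contractions $0\le b\le\mathbb{1}$ with $\tau(b)\le t$; this feasible set is preserved by $\Phi$ (projections are not), the formula holds with no diffuseness hypothesis, and the tensor-product reduction disappears entirely. The price is that you must establish the variational formula yourself, and your sketch of both halves is sound: the upper bound follows from $\tau(yb)\le\tau(|yb|)\le\int_0^1\mu_s(y)\mu_s(b)\,\mathrm{d}s$ together with the rearrangement estimate
$\int_0^1\mu_s(y)\mu_s(b)\,\mathrm{d}s-\int_0^t\mu_s(y)\,\mathrm{d}s\le\mu_t(y)\bigl(\int_0^1\mu_s(b)\,\mathrm{d}s-t\bigr)\le 0$,
and your spectral-projection construction does attain the value (when the spectral atom at $\mu_t(y)$ has trace zero, one checks $\tau\bigl(e_{(\mu_t(y),\infty)}(y)\bigr)=t$ already, so the interpolation parameter is vacuous). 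Your trace self-adjointness computation $\tau(\Phi(x)b)=\tau(x\Phi(b))$ is the same identity the paper uses, stated in slightly greater generality. In short: same skeleton, but your choice of feasible set trades the citation of \cite{fack-kosaki} plus the diffuseness reduction for a short self-contained proof of the duality formula; both are valid, and yours is arguably cleaner on this point.
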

\begin{proof}
Assume that $\mathscr{M}, \mathscr{N}$ both have no minimal projections. From \cite[Lemma 4.1]{fack-kosaki}, note that $$\Sigma_t(\Phi(H)) = \sup \{ \tau(\Phi(H)E) : E \textrm{ is a projection in } \mathscr{N} \textrm{ such that } \tau(E) \le t \},$$
$$\Sigma_t(H) = \sup \{ \tau(HE) : E \textrm{ is a projection in } \mathscr{M} \textrm{ such that } \tau(E) \le t \}.$$
For a projection $E \in \mathscr{N}$, we have $\tau(\Phi(H)E) = \tau(\Phi(HE)) = \tau(HE).$ Since $\mathscr{N} \subseteq \mathscr{M}$, we conclude from the above variational description for $\Sigma_t(\cdot)$ that $\Sigma_t(\Phi(H)) \le \Sigma_t(H).$

As remarked after \cite[Lemma 4.1]{fack-kosaki}, note that our original assumption on $\mathscr{M}, \mathscr{N}$ (no minimal projections) is not restrictive. Clearly $\mathscr{M} \otimes L^{\infty}([0, 1]; \textrm{d}s)$, $ \mathscr{N} \otimes L^{\infty}([0, 1]; \textrm{d}s)$ have no minimal projections. Let $\iota : L^{\infty}([0, 1]; \textrm{d}s) \to L^{\infty}([0, 1]; \textrm{d}s)$ be the identity map. In this scenario, we consider the conditional expectation $\Phi \otimes \iota : \mathscr{M} \otimes L^{\infty}([0, 1]; \textrm{d}t) \to \mathscr{N} \otimes L^{\infty}([0, 1]; \textrm{d}t)$ which preserves the trace given by $\tau \otimes (\int_{0}^1 \cdot \; \textrm{d}s)$. By Remark \ref{rmrk:variation}, we have $\mu_t(H) = \mu_t \big(H \otimes I), \mu_t(\Phi(H)) = \mu_t \big(\Phi(H) \otimes I)$,  where the $s$-numbers for $H \otimes I, \Phi(H) \otimes I$ are relative to $\tau \otimes (\int_{0}^1 \cdot \; \textrm{d}s)$.   

\end{proof}

\begin{cor}
\label{cor:jensen_seed}
\textsl{
For an operator $A$ in $\mathfrak{A}$ and $t \in [0, 1]$, we have 
$$\Sigma_t(|\Phi(A)|^2) \le \Sigma_t(|A|^2).$$
}
\end{cor}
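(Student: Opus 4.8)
The plan is to chain together the two results immediately preceding the corollary: the generalized Schwarz inequality (Lemma~\ref{lem:schwarz_ineq}) and the majorization for the conditional expectation (Proposition~\ref{prop:maj_phi}), with Lemma~\ref{lem:s_number},(iv) serving as the glue between them.

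First I would apply Lemma~\ref{lem:schwarz_ineq} to $x = a$, yielding the operator inequality $|\Phi(a)|^2 \le \Phi(|a|^2)$ between two positive operators in $\mathscr{M}$; here $|a|^2 = a^*a$ is positive, so $\Phi(|a|^2)$ is a positive operator lying in $\mathscr{N} \subseteq \mathscr{M}$. Since $0 \le |\Phi(a)|^2 \le \Phi(|a|^2)$, Lemma~\ref{lem:s_number},(iv) gives $\mu_s(|\Phi(a)|^2) \le \mu_s(\Phi(|a|^2))$ for every $s \in [0,1]$, and integrating this pointwise bound over the interval $[0,t]$ produces $\Sigma_t(|\Phi(a)|^2) \le \Sigma_t(\Phi(|a|^2))$.

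Finally I would invoke Proposition~\ref{prop:maj_phi} with the positive operator $x = |a|^2$ to obtain $\Sigma_t(\Phi(|a|^2)) \le \Sigma_t(|a|^2)$, and combining the two estimates yields the desired inequality. There is no genuine obstacle here, as the corollary is engineered precisely to bridge the operator-level Schwarz inequality and the $s$-number majorization established in the previous proposition. The only subtlety worth flagging is that one cannot compare $|\Phi(a)|^2$ with $|a|^2$ directly: since $\Phi$ is not multiplicative on arbitrary products (property (ii) in the definition only applies when both factors lie in $\mathcal{A}$, whereas $a^* \notin \mathcal{A}$ in general), we have $\Phi(|a|^2) \neq |\Phi(a)|^2$, so routing the argument through the intermediate quantity $\Phi(|a|^2)$ is essential.
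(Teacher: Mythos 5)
Your proposal is correct and follows exactly the paper's own argument: apply the generalized Schwarz inequality to get $|\Phi(a)|^2 \le \Phi(|a|^2)$, use Lemma \ref{lem:s_number},(iv) to pass to $\Sigma_t$, and finish with Proposition \ref{prop:maj_phi} applied to $|a|^2$. No differences worth noting.
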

\begin{proof}
As $|\Phi(A)|^2 \le \Phi(|A|^2)$ (by the generalized Schwarz inequality), we have 
$$\Sigma_t(|\Phi(A)|^2) \le \Sigma_t(\Phi(|A|^2)) \textrm{ for } t \in [0, 1] \;\;\; (\textrm{by Lemma \ref{lem:s_number},(iv)}).$$
The conclusion follows using Proposition \ref{prop:maj_phi} for the positive operator $|A|^2 \in \mathscr{M}$.
\end{proof}

We remind the reader that when discussing the invertibility of an operator in $\mathfrak{A}$, there are two main ambient algebras under consideration: $\mathfrak{A}$ and $\mathscr{M}$. We say that $A \in \mathfrak{A}$ is invertible if $A$ has an inverse in $\mathscr{M}$. If the inverse is also in $\mathfrak{A}$, we say that $A \in \mathfrak{A} \cap \mathfrak{A}^{-1}$.

\begin{prop}
\label{prop:labuschagne_sqrt}
\textsl{
For an invertible operator $A \in \mathfrak{A},$ and $t \in [0, 1]$, we have 
$$\Sigma_t(|\Phi(A)|^{1/2^n} ) \le \Sigma_t( |A|^{1 / 2^n}) \; \forall n \in \mathbb{N}.$$
}
\end{prop}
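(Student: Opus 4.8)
The plan is to prove the proposition by a downward induction on the exponent, halving it at each stage and using Corollary \ref{cor:jensen_seed} as the seed. Let $S(p)$ denote the statement that $\Sigma_t(|\Phi(x)|^p) \le \Sigma_t(|x|^p)$ holds for every invertible $x \in \mathcal A$ and every $t \in [0,1]$. Corollary \ref{cor:jensen_seed} gives $S(2)$ (it holds on all of $\mathcal A$, in particular for the invertible elements), so it suffices to establish the implication $S(p) \Longrightarrow S(p/2)$; applying it $n+1$ times starting from $S(2)$ then delivers $S(2^{-n})$ for every $n \in \mathbb{N}$.

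The engine of the induction should be a \emph{balanced} factorization of $a$ inside $\mathcal A$ supplied by Arveson's theorem. Given invertible $a \in \mathcal A$, the operator $|a|$ is invertible and positive, so by Lemma \ref{lem:arveson_fac},(ii) one may write $|a| = z^*z$ with $z \in \mathcal A \cap \mathcal A^{-1}$, whence $|z| = |a|^{1/2}$. I would then set $c := az^{-1}$ and $d := z$; both lie in $\mathcal A$ and are invertible in $\mathscr M$, and $a = cd$. Using $|a|^2 = (z^*z)^2 = z^*(zz^*)z$, a direct computation gives $c^*c = (z^*)^{-1}|a|^2 z^{-1} = zz^*$, so that $|c| = |z^*|$. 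By Lemma \ref{lem:s_number},(iii),(v) this forces $\mu_s(c) = \mu_s(z^*) = \mu_s(z) = \mu_s(a)^{1/2} = \mu_s(d)$ for all $s$: the key feature is that \emph{both} factors have singular-value function exactly $\mu_s(a)^{1/2}$, so neither carries more ``size'' than the balanced square root of $a$.

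To run the step, assume $S(p)$ and take $a = cd$ as above. Since $\Phi$ is multiplicative on $\mathcal A$, we have $\Phi(a) = \Phi(c)\Phi(d)$, and Lemma \ref{lem:cum_sum},(iii) (applied with its parameter equal to $p$, so that $p/2$ appears on the left) gives
\begin{equation*}
\Sigma_t(|\Phi(a)|^{p/2}) = \Sigma_t(|\Phi(c)\Phi(d)|^{p/2}) \le \tfrac{1}{2}\big( \Sigma_t(|\Phi(c)|^{p}) + \Sigma_t(|\Phi(d)|^{p}) \big).
\end{equation*}
Applying the inductive hypothesis $S(p)$ to the invertible elements $c$ and $d$, and then invoking $\mu_s(c) = \mu_s(d) = \mu_s(a)^{1/2}$ together with Lemma \ref{lem:s_number},(v), each summand collapses to the same quantity:
\begin{equation*}
\Sigma_t(|\Phi(c)|^{p}) \le \Sigma_t(|c|^{p}) = \int_0^t \mu_s(c)^{p}\,\mathrm{d}s = \int_0^t \mu_s(a)^{p/2}\,\mathrm{d}s = \Sigma_t(|a|^{p/2}),
\end{equation*}
and identically for $d$. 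Substituting yields $\Sigma_t(|\Phi(a)|^{p/2}) \le \Sigma_t(|a|^{p/2})$, which is $S(p/2)$.

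The hard part is finding the balanced factorization: for the averaging in Lemma \ref{lem:cum_sum},(iii) to close with no loss, the two inductive bounds on $\Phi(c)$ and $\Phi(d)$ must each contribute exactly $\Sigma_t(|a|^{p/2})$, and this forces $\mu_s(c) = \mu_s(d) = \mu_s(a)^{1/2}$. A naive split such as the polar/Arveson factorization $a = (\text{unitary})\cdot\tilde a$ would load all the modulus onto one factor while leaving unit modulus on the other, overshooting the target (as one already sees for $a = \lambda\mathbb{1}$ with $\lambda$ large). The choice $c = az^{-1}$ with $z^*z = |a|$ is precisely what equalizes the two singular-value functions, and the one calculation that must be done with care is the identity $c^*c = zz^*$, i.e.\ $|c| = |z^*|$. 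Finally, invertibility of $a$ is used twice — to guarantee that $|a|$ is invertible, so that Lemma \ref{lem:arveson_fac},(ii) applies, and to give meaning to $c = az^{-1}$ — which is why the hypothesis is genuinely needed at this stage. (The Newton iteration of Lemma \ref{lem:sqrt_labuschagne} is not required here; its role is reserved for the passage $n \to \infty$ in the subsequent majorization theorem.)
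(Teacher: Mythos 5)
Your proof is correct, and it takes a genuinely different (and shorter) route than the paper's. The paper runs the same downward induction $P(r)\Rightarrow P(r/2)$ from the seed $P(2)$ of Corollary \ref{cor:jensen_seed}, but it realizes your ``balanced factorization'' only asymptotically: it feeds the Newton iterates $x_{n+1}=\tfrac{1}{2}(x_n+|a|^r x_n^{-1})$ of Lemma \ref{lem:sqrt_labuschagne} into Arveson's factorization to produce $z_n\in\mathcal{A}\cap\mathcal{A}^{-1}$ with $|z_n|=x_n^{1/r}$, applies the inductive hypothesis and Lemma \ref{lem:cum_sum},(iii) to the factorization $a=(az_n^{-1})z_n$ for each $n$, and then passes to the limit via Lemma \ref{lem:mon_conv}, since only in the limit do both factors carry the singular-value function $\mu_s(a)^{1/2}$. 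You instead take the limiting object directly: applying Lemma \ref{lem:arveson_fac},(ii) to the invertible positive operator $|a|$ itself gives $z^*z=|a|$, and your computation $c^*c=(z^*)^{-1}z^*(zz^*)zz^{-1}=zz^*$, combined with $\mu_s(z^*)=\mu_s(z)$ from Lemma \ref{lem:s_number},(iii), shows that both factors of $a=(az^{-1})z$ have $s$-number function exactly $\mu_s(a)^{1/2}$, so the averaging in Lemma \ref{lem:cum_sum},(iii) closes in a single step. I checked the key identity and the bookkeeping ($c,d\in\mathcal{A}$ invertible, $\Phi(a)=\Phi(c)\Phi(d)$ by multiplicativity, $\Sigma_t(|c|^p)=\Sigma_t(|d|^p)=\Sigma_t(|a|^{p/2})$ via Lemma \ref{lem:s_number},(iii) and (v)); all of it holds. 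What your version buys is the elimination of Lemmas \ref{lem:sqrt_labuschagne} and \ref{lem:mon_conv} from this proposition entirely; what the paper's version buys is fidelity to Labuschagne's original iterative scheme, of which your argument is the clean specialization to the $\Sigma_t$ functionals.
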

\begin{proof}
Consider the family of assertions indexed by $r > 0$, $$P(r) : \Sigma_t(|\Phi(A)|^r) \le \Sigma_t(|A|^r) \textrm{ for all invertible operators }A \in \mathfrak{A}  \textrm{ and } t \in [0, 1].$$ 

For a fixed $r > 0$, let us assume that $P(r)$ is true. Consider an invertible operator $A \in \mathfrak{A}$. Let $H_1 = |A|^r$ and inductively define $H_{n+1} := \frac{1}{2}(H_n + |A|^r H_n ^{-1})$ for $n \in \mathbb{N}$. By the Arveson-Beurling factorization theorem (Lemma \ref{lem:arveson_fac}, (ii)), we may choose a sequence of invertible operators $(B_n)_{n \in \mathbb{N} } \subset \mathfrak{A} \cap \mathfrak{A}^{-1}$ such that $ |B_n| = H_n^{\frac{1}{r}}$ for every $n \in \mathbb{N}$. Note that $|(B_n ^{-1})^*| = |B_n|^{-1} = |H_n|^{-\frac{1}{r}} $. By Lemma \ref{lem:cum_sum}, (ii), we observe that $\Sigma_t(|AB_n^{-1}|^r) = \Sigma_t(\big||A| |(B_n^{-1})^*|\big|^r) =  \Sigma_t(\big||A||B_n|^{-1}\big|^r) = \Sigma_t(|A|^r H_n^{-1})$ (the last equality holds because $H_n$ commutes with $|A|$.) For $t \in [0, 1]$, we have
\begin{align*}
&\phantom{=}\frac{1}{2} (\Sigma_t(H_n) + \Sigma_t(|A|^r H_n ^{-1})) \\
& = \frac{1}{2} (\Sigma_t(|B_n|^r) + \Sigma_t(|A B_n^{-1}|^r) )& \\
& \ge \frac{1}{2} (\Sigma_t(|\Phi(B_n)|^r) + \Sigma_t(|\Phi(A B_n^{-1})|^r) ) & (\textrm{by the hypothesis }P(r))\\
& \ge \Sigma_t(|\Phi(A B_n^{-1}) \Phi(B_n)|^{\frac{r}{2}}) & (\textrm{by Lemma } \ref{lem:cum_sum},\textrm{(iii)}) \\
& = \Sigma_t(|\Phi(A)|^{\frac{r}{2}}) & (\textrm{since } \Phi(X)\Phi(Y) = \Phi(XY), \textrm{ for } X, Y \in \mathfrak{A}.)
\end{align*}

By Lemma \ref{lem:sqrt_labuschagne}, $(H_n)_{n\in \mathbb{N}}$ is a decreasing sequence of positive operators uniformly converging to $|A|^{\frac{r}{2}}$. Thus $(|A|^r H_n^{-1})_{n \in \mathbb{N}}$ is an increasing sequence of positive operators uniformly converging to $|A|^{\frac{r}{2}}.$ By Lemma \ref{lem:mon_conv}, $$\lim_{n \rightarrow \infty} \Sigma_t(H_n) = \Sigma_t(|A|^{\frac{r}{2}}) = \lim_{n \rightarrow \infty} \Sigma_t(|A|^{r}H_n^{-1}) \textrm{ for } t \in [0, 1]. $$

Hence $\Sigma_t(|\Phi(A)|^{\frac{r}{2}}) \le \Sigma_t(|A|^{\frac{r}{2}})$ for all invertible operators $A \in \mathfrak{A}$ and $t \in [0, 1]$. Thus $P(r) \Rightarrow P(\frac{r}{2})$. As the assertion $P(2)$ is true (by Corollary \ref{cor:jensen_seed}), we conclude that $P(2^{-n})$ is true for all $n \in \mathbb{N}$.
\end{proof}

\begin{thm}
\label{thm:jensen_subdiagonal}
\textsl{
For an invertible operator $A \in \mathfrak{A}$, we have
$$\int_{0}^t \log \mu_s(|\Phi(A)|) \; \textrm{d}s \le \int_{0}^t \log \mu_s(|A|) \; \textrm{d}s \textrm{ for } t \in [0, 1].$$
In addition, if $A \in \mathfrak{A} \cap \mathfrak{A}^{-1}$, we have
$$\int_{0}^1 \log \mu_s(|\Phi(A)|) \; \textrm{d}s = \int_{0}^1 \log \mu_s(|A|) \; \textrm{d}s.$$
}
\end{thm}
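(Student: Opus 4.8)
The plan is to obtain the theorem from Proposition~\ref{prop:labuschagne_sqrt} by letting the exponent $2^{-n}$ tend to $0$ and extracting a logarithm. Writing $p_n := 2^{-n}$ and using $\mu_s(|x|^{p}) = \mu_s(|x|)^{p}$ (Lemma~\ref{lem:s_number},(v)), Proposition~\ref{prop:labuschagne_sqrt} reads
$$\int_0^t \mu_s(|\Phi(a)|)^{p_n}\,\textrm{d}s \le \int_0^t \mu_s(|a|)^{p_n}\,\textrm{d}s \qquad (t \in [0,1]).$$
Subtracting $t = \int_0^t 1\,\textrm{d}s$ from each side and dividing by $p_n > 0$, this becomes $\int_0^t g_n(\mu_s(|\Phi(a)|))\,\textrm{d}s \le \int_0^t g_n(\mu_s(|a|))\,\textrm{d}s$, where $g_n(x) := (x^{p_n}-1)/p_n$.

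The key analytic point is to pass to the limit inside these integrals. For fixed $x > 0$ the map $p \mapsto (x^p-1)/p$ is the chord-slope at $0$ of the convex function $p \mapsto x^p$, hence increasing in $p$; therefore $g_n(x) \downarrow \log x$ as $n \to \infty$ (with the convention $g_n(0) = -2^n \downarrow -\infty = \log 0$). Since $g_n$ is also increasing in $x$, one has $g_n(\mu_s(|\Phi(a)|)) \le g_1(\|\Phi(a)\|)$ and $g_n(\mu_s(|a|)) \le g_1(\|a\|)$, so each integrand is bounded above by a constant. Applying the monotone convergence theorem to the nonnegative increasing sequences $g_1(\|\Phi(a)\|) - g_n(\mu_s(|\Phi(a)|))$ and $g_1(\|a\|) - g_n(\mu_s(|a|))$, the right-hand integral converges to $\int_0^t \log\mu_s(|a|)\,\textrm{d}s$, which is finite because the invertibility of $a$ forces $\mu_s(|a|) \ge \delta > 0$ on $[0,1)$, while the left-hand integral converges to $\int_0^t \log\mu_s(|\Phi(a)|)\,\textrm{d}s \in [-\infty,\infty)$. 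Since the inequality holds for every $n$, it survives in the limit, giving the first assertion.

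For the equality when $a \in \mathcal{A} \cap \mathcal{A}^{-1}$, I would apply the first assertion to both $a$ and $a^{-1}$ (both lie in $\mathcal{A}$ and are invertible in $\mathscr{M}$). Because $a, a^{-1} \in \mathcal{A}$, multiplicativity of $\Phi$ gives $\Phi(a)\Phi(a^{-1}) = \Phi(\mathbb{1}) = \mathbb{1}$ and likewise on the other side, so $\Phi(a)$ is invertible with $\Phi(a)^{-1} = \Phi(a^{-1})$; in particular $|\Phi(a)|$ and $|a|$ are invertible and, by Remark~\ref{rmrk:log_tr}, the two integrals over $[0,1]$ equal $\tau(\log|\Phi(a)|)$ and $\tau(\log|a|)$. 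The first assertion applied to $a$ gives $\tau(\log|\Phi(a)|) \le \tau(\log|a|)$; applied to $a^{-1}$ it gives $\tau(\log|\Phi(a)^{-1}|) \le \tau(\log|a^{-1}|)$. Using the inverse property of the Fuglede--Kadison determinant, namely $\tau(\log|x^{-1}|) = -\tau(\log|x|)$ for invertible $x$ (equivalently $\Delta(x^{-1}) = \Delta(x)^{-1}$), the second inequality reverses into $\tau(\log|\Phi(a)|) \ge \tau(\log|a|)$, and the two together force the desired equality.

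The step I expect to be the main obstacle is the limit interchange in the first assertion, precisely because $\Phi(a)$ need not be invertible, so $\mu_s(|\Phi(a)|)$ may vanish on a set of positive measure and $\log\mu_s(|\Phi(a)|)$ may be non-integrable from below. The monotone decrease $g_n \downarrow \log$ together with the uniform upper bound is exactly what makes the monotone convergence theorem applicable even when the limiting integral is $-\infty$, so no separate argument for the degenerate case is required. The inverse identity for the determinant invoked in the equality case is classical and would simply be cited.
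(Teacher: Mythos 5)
Your proof is correct, and the first assertion follows essentially the same route as the paper: the paper also passes from Proposition~\ref{prop:labuschagne_sqrt} to the logarithm via the decreasing approximants $2^n(\lambda^{1/2^n}-1)\downarrow\log\lambda$, which are exactly your $g_n$. The only difference there is technical: the paper first normalizes so that $\mathbb{1}\le|a|$ and $\mathbb{1}\le|\Phi(a)|$ (by scaling $a$) in order to apply monotone convergence to a nonnegative decreasing sequence, whereas you subtract a constant upper bound and apply it to $g_1(\|\cdot\|)-g_n(\mu_s(\cdot))\ge 0$. Your variant is actually the more robust one, since the paper's normalization tacitly requires $|\Phi(a)|$ to be bounded below, which is not automatic for $a\in\mathcal{A}$ merely invertible in $\mathscr{M}$; your argument covers the case $\int_0^t\log\mu_s(|\Phi(a)|)\,\textrm{d}s=-\infty$ without a separate reduction. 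For the equality statement the routes genuinely diverge: the paper simply invokes Remark~\ref{rmrk:log_tr} together with Arveson's Theorem~4.4.3 (Jensen's formula on $\mathcal{A}\cap\mathcal{A}^{-1}$), while you rederive that formula from the $t=1$ case of the inequality applied to both $a$ and $a^{-1}$, using $\Phi(a^{-1})=\Phi(a)^{-1}$ and $\tau(\log|x^{-1}|)=-\tau(\log|x|)$ (equivalently $\Delta(x^{-1})=\Delta(x)^{-1}$). This is precisely the standard mechanism behind Arveson's equivalence of Jensen's inequality and Jensen's formula, so your version buys self-containedness at the cost of citing the multiplicativity/inverse property of the Fuglede--Kadison determinant instead; both are legitimate, and all the auxiliary facts you use ($\mu_s(|a|)\ge\delta>0$ for invertible $a$, invertibility of $\Phi(a)$ for $a\in\mathcal{A}\cap\mathcal{A}^{-1}$) are correctly justified.
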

\begin{proof}
For $\lambda \ge 1$ and $n \in \mathbb{N}$, note that $$2^n(\lambda ^{1/2^n } - 1) = 2^{n+1}(\lambda ^{1/2^{n+1}} - 1) \cdot \frac{1}{2}(\lambda ^{1/2^{n+1}} + 1) \ge 2^{n+1}(\lambda ^{1/2^{n+1}} - 1).$$
Thus the sequence $(2^n(\lambda ^{1/2^n} - 1))_{n \in \mathbb{N} } \subset \mathbb{R}_{+}$ is decreasing and converges to $\log \lambda$ (as $\lim_{r \rightarrow 0} \frac{\lambda ^r - 1}{r} = \log \lambda$).

Let $H$ be a positive operator in $\mathscr{M}$ such that $I \le H$. Then $1 \le \mu_t(H)$ for $t \in [0, 1]$. The sequence of functions $t \in [0, 1] \mapsto 2^n(\mu_t(x)^{1/2^n} - 1), n \in \mathbb{N}$ is decreasing and converges pointwise to the function $t \in [0, 1] \mapsto \log \mu_t(x)$. By the monotone convergence theorem, we have $$\lim_{n \rightarrow \infty} \int_{0}^t 2^n(\mu_s(H)^{1/2^n} - 1) \; \textrm{d}s = \int_{0}^t \log \mu_s(H) \; \textrm{d}s, \textrm{ for } t \in [0, 1].$$ 
Without loss of generality, we may assume $I \le |A|,$ and $I \le  |\Phi(A)|$ by appropriately scaling $A$ if necessary. Using Proposition \ref{prop:labuschagne_sqrt}, we conclude that $$\int_{0}^t \log \mu_s(|\Phi(A)|) \; \textrm{d}s \le \int_{0}^t \log \mu_s(|A|) \; \textrm{d}s \textrm{ for } t \in [0, 1].$$
If $A \in \mathfrak{A} \cap \mathfrak{A}^{-1}$, by Remark \ref{rmrk:log_tr} and \cite[Theorem 4.4.3]{arveson_analyticity}, we have 
$$ \int_{0}^1 \log \mu_s(|\Phi(A)|) \; \textrm{d}s =  \tau(\log |\Phi(A)| ) = \tau(\log |A|) = \int_{0}^1 \log \mu_s(|A|) \; \textrm{d}s.$$
\end{proof}

\section{Applications}
\label{sec:app}

\begin{thm}
\label{thm:main_ineq}
\textsl{
Let $f : \mathbb{R}_{+} \to \mathbb{R}$ be an increasing continuous function such that $f \circ \exp$ is convex on $\mathbb{R}$. For $A \in \mathfrak{A}$, we have 
\begin{equation}
\label{eqn:main_ineq}
\tau(f(|\Phi(A)|)) \le \tau(f(|A|)).\end{equation}
In addition, if $A$ is invertible in $\mathscr{M}$ and $f \circ \exp$ is strictly convex, equality holds in (\ref{eqn:main_ineq}) if and only if $\Phi(A) = A$.
}
\end{thm}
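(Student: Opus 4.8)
The plan is to first establish the inequality for operators in $\mathcal{A} \cap \mathcal{A}^{-1}$, where the hypotheses of the Hardy--Littlewood--P\'olya lemma are met exactly, then to bootstrap to arbitrary $a \in \mathcal{A}$ by a $2\times 2$ dilation, and finally to treat the equality case separately via Arveson's factorization. Write $g := f \circ \exp$, which is continuous, convex, and (as a composite of increasing maps) increasing on $\mathbb{R}$. For $a \in \mathcal{A} \cap \mathcal{A}^{-1}$, note that $\Phi(a^{-1})$ inverts $\Phi(a)$ (multiplicativity of $\Phi$ on $\mathcal{A}$), so $|\Phi(a)|$ is invertible and $\varphi(s) := \log \mu_s(|\Phi(a)|)$, $\psi(s) := \log \mu_s(|a|)$ are real-valued and decreasing (Lemma \ref{lem:s_number},(ii)). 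Theorem \ref{thm:jensen_subdiagonal} supplies precisely the two hypotheses of Lemma \ref{lem:hlp_maj}, namely $\int_0^t \varphi \le \int_0^t \psi$ for $t \in [0,1)$ together with the terminal equality $\int_0^1 \varphi = \int_0^1 \psi$. Applying Lemma \ref{lem:hlp_maj} to the convex function $g$ and using $g(\varphi(s)) = f(\mu_s(|\Phi(a)|))$, $g(\psi(s)) = f(\mu_s(|a|))$, Lemma \ref{lem:s_number},(vi) rewrites both sides as traces, giving $\tau(f(|\Phi(a)|)) \le \tau(f(|a|))$.

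Next I would remove the restriction $a \in \mathcal{A}\cap\mathcal{A}^{-1}$ for the inequality by passing to $M_2(\mathscr{M})$. One checks that $M_2(\mathcal{A})$ is a finite subdiagonal subalgebra of $M_2(\mathscr{M})$ with respect to the entrywise expectation $\Phi_2$ and the tracial state $\tau_2((x_{ij})) = \tfrac12(\tau(x_{11}) + \tau(x_{22}))$, since $M_2(\mathcal{A}) \cap M_2(\mathcal{A})^* = M_2(\mathscr{N})$, $\Phi_2$ is multiplicative on $M_2(\mathcal{A})$ and $\tau_2$-preserving, and $M_2(\mathcal{A}) + M_2(\mathcal{A})^*$ is ultraweakly dense. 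For $a \in \mathcal{A}$ and $\epsilon > 0$ the operator
$$A_\epsilon := \begin{bmatrix} \epsilon \mathbb{1} & a \\ 0 & \epsilon \mathbb{1} \end{bmatrix}$$
lies in $M_2(\mathcal{A}) \cap M_2(\mathcal{A})^{-1}$, so the case already settled applies to it. As $\epsilon \to 0$ one has $|A_\epsilon|^2 \to \mathrm{diag}(0, |a|^2)$ and $|\Phi_2(A_\epsilon)|^2 \to \mathrm{diag}(0, |\Phi(a)|^2)$ in norm; since $x \mapsto \sqrt{x}$ is norm-continuous and $\mu_s$ is $1$-Lipschitz in the uniform norm (Lemma \ref{lem:s_number},(i)), the functionals $\tau_2(f(|\cdot|))$ and $\tau_2(f(|\Phi_2(\cdot)|))$ are norm-continuous, the latter via $\tau_2(f(|B|)) = \int_0^1 f(\mu_s(B))\,\mathrm{d}s$ (Lemma \ref{lem:s_number},(vi)). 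Taking the limit in $\tau_2(f(|\Phi_2(A_\epsilon)|)) \le \tau_2(f(|A_\epsilon|))$ and cancelling the common summand $\tfrac12 f(0)$ contributed by the vanishing block yields (\ref{eqn:main_ineq}) for every $a \in \mathcal{A}$.

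For the equality statement, assume $a$ is invertible in $\mathscr{M}$ and $g$ is strictly convex; strict convexity of $g = f\circ\exp$ forces $f$ to be strictly increasing, since otherwise $f$ would be constant on a subinterval, making $g$ affine (indeed constant) on a subinterval. By Arveson's factorization (Lemma \ref{lem:arveson_fac},(i)) write $a = u\tilde{a}$ with $u \in \mathcal{A}$ unitary and $\tilde{a} \in \mathcal{A} \cap \mathcal{A}^{-1}$, so $|a| = |\tilde{a}|$ and $\Phi(a) = \Phi(u)\Phi(\tilde{a})$ with $\|\Phi(u)\| \le 1$; the latter gives $|\Phi(a)|^2 \le |\Phi(\tilde{a})|^2$, hence $|\Phi(a)| \le |\Phi(\tilde{a})|$. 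This produces the chain $\tau(f(|\Phi(a)|)) \le \tau(f(|\Phi(\tilde{a})|)) \le \tau(f(|\tilde{a}|)) = \tau(f(|a|))$, the middle step being the inequality already proved for $\tilde{a}$. If equality holds throughout: the strict-convexity clause of Lemma \ref{lem:hlp_maj} forces $\mu_s(|\Phi(\tilde{a})|) = \mu_s(|\tilde{a}|)$ for all $s$, so $\tau(|\Phi(\tilde{a})|^2) = \tau(|\tilde{a}|^2)$; since $\Phi$ is $\tau$-preserving, the positive operator $\Phi(\tilde{a}^*\tilde{a}) - \Phi(\tilde{a})^*\Phi(\tilde{a})$ has zero trace, so by faithfulness the generalized Schwarz inequality (Lemma \ref{lem:schwarz_ineq}) is saturated and $\Phi(\tilde{a}) = \tilde{a}$. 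Moreover the equality clause of Lemma \ref{lem:trace_ineq} (using $f$ strictly increasing) forces $|\Phi(a)| = |\Phi(\tilde{a})| = |a|$; substituting $\Phi(\tilde{a}) = \tilde{a}$ gives $\tilde{a}^*(\Phi(u)^*\Phi(u) - \mathbb{1})\tilde{a} = 0$, whence $\Phi(u)^*\Phi(u) = \mathbb{1} = \Phi(u^*u)$, and a second application of Lemma \ref{lem:schwarz_ineq} yields $\Phi(u) = u$, so $\Phi(a) = \Phi(u)\Phi(\tilde{a}) = u\tilde{a} = a$; the converse is trivial. I expect the main obstacle to be this final part: converting the scalar equality of traces into the operator identity $\Phi(a) = a$. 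The key is to route the two pieces of information (equality of singular-value functions and equality of moduli) through the factorization so that each collapses onto a saturated instance of the generalized Schwarz inequality.
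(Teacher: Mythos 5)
Your proposal is correct, and the first and last thirds of it (the Hardy--Littlewood--P\'olya step for $a \in \mathcal{A} \cap \mathcal{A}^{-1}$, and the equality analysis via Arveson's factorization $a = u\tilde{a}$, routed through two saturated instances of the generalized Schwarz inequality) coincide with the paper's argument almost line for line. Where you genuinely diverge is in passing from $\mathcal{A} \cap \mathcal{A}^{-1}$ to arbitrary $a \in \mathcal{A}$. The paper does this in two stages: first for $a$ invertible in $\mathscr{M}$ via the factorization $a = u\tilde{a}$ and the comparison $|\Phi(a)| \le |\Phi(\tilde{a})|$, and then for general $a$ by writing $\varepsilon \mathbb{1} + a^*a = z^*z$ with $z \in \mathcal{A} \cap \mathcal{A}^{-1}$ (Lemma \ref{lem:arveson_fac},(ii)), showing $|\Phi(a)| \le |\Phi(z)|$ from the identity $\varepsilon (z^{-1})^*z^{-1} + (az^{-1})^*(az^{-1}) = \mathbb{1}$, and letting $\varepsilon \to 0$. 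You instead amplify to $M_2(\mathscr{M})$ and embed $a$ into the invertible element $A_\epsilon \in M_2(\mathcal{A}) \cap M_2(\mathcal{A})^{-1}$; this handles all of $\mathcal{A}$ in one step and avoids the second use of Arveson's factorization entirely. The dilation is legitimate: $M_2(\mathcal{A})$ is an ultraweakly closed finite subdiagonal subalgebra of $M_2(\mathscr{M})$ with respect to $\Phi \otimes \mathrm{id}$ and the normalized trace (so it is automatically maximal by Exel's theorem, and Theorem \ref{thm:jensen_subdiagonal} applies to it verbatim), the limits $|A_\epsilon|^2 \to \mathrm{diag}(0,|a|^2)$ and $|\Phi_2(A_\epsilon)|^2 \to \mathrm{diag}(0,|\Phi(a)|^2)$ are as you say, and the common $\tfrac{1}{2}f(0)$ term cancels. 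The only point worth making explicit is that all the supporting results are being invoked for the amplified algebra rather than for $\mathcal{A}$ itself, which is harmless since they are proved for an arbitrary finite subdiagonal algebra. The paper's route stays inside $\mathscr{M}$ and exhibits the useful intermediate comparison $|\Phi(a)| \le |\Phi(z)|$; yours trades that for a cleaner one-step reduction at the cost of enlarging the ambient algebra.
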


\begin{proof}
We first prove the result for operators in $\mathfrak{A} \cap \mathfrak{A}^{-1}$. Let $A \in \mathfrak{A} \cap \mathfrak{A}^{-1}$. Applying the Hardy-Littlewood-P\'{o}lya inequality (Lemma \ref{lem:hlp_maj}) in the context of Theorem \ref{thm:jensen_subdiagonal}, we get inequality (\ref{eqn:main_ineq}). Suppose that $f \circ \exp$ is strictly convex and $\tau(f(|\Phi(A)|)) = \tau(f(|A|))$. By the equality condition in Lemma \ref{lem:hlp_maj} and right-continuity of $t \mapsto \mu_t(\cdot)$, we conclude that $\mu_t(|\Phi(A)|) = \mu_t(|A|)$ for $t \in [0, 1]$. Thus $\tau(|\Phi(A)|^2) = \int_{0}^1 \mu_s(|\Phi(A)|)^2 \; \textrm{d}s = \int_{0}^1 \mu_s(|A|)^2 \; \textrm{d}s = \tau(|A|^2) = \tau(\Phi(|A|^2)) \Rightarrow |\Phi(A)|^2 = \Phi(|A|^2) \Rightarrow \Phi(A) = A$ (by Lemma \ref{lem:schwarz_ineq}).

We next prove the inequality under the weaker hypothesis that $A \in \mathfrak{A}$ is invertible in $\mathscr{M}$. By Lemma \ref{lem:arveson_fac}, (i), there is a unitary $U$ in $\mathfrak{A}$ and $\tilde{A} \in \mathfrak{A} \cap \mathfrak{A}^{-1}$ such that $A = U \tilde{A}$. Using the generalized Schwarz inequality (Lemma \ref{lem:schwarz_ineq}), we note that $|\Phi(A)|^2 = \Phi(\tilde{A})^*\Phi(U)^*\Phi(U) \Phi(\tilde{A}) \le \Phi(\tilde{A})^* \Phi(U^*U) \Phi(\tilde{A}) = |\Phi(\tilde{A})|^2$. By the operator monotonicity of the map $t \in \mathbb{R}_{+} \mapsto \sqrt{t}$, we have $|\Phi(A)| \le |\Phi(\tilde{A})|$, with equality if and only if $\Phi(U) = U$ (by Lemma \ref{lem:schwarz_ineq} and as $\Phi(\tilde{A})$ is invertible). Since $f$ is increasing and inequality (\ref{eqn:main_ineq}) holds for $\tilde{A}$, using Lemma \ref{lem:trace_ineq} we have $\tau(f(|\Phi(A)|)) \le \tau(f(|\Phi(\tilde{A})|)) \le \tau(f(|\tilde{A}|)) = \tau(f(|A|))$ which proves inequality (\ref{eqn:main_ineq}) for $A$.  Suppose that $f \circ \exp$ is strictly convex and $\tau(f(|\Phi(A)|)) =  \tau(f(|A|))$. Then $f$ is strictly increasing and we have $\Phi(U) = U$ and $\Phi(\tilde{A}) = \tilde{A}$. Thus if equality holds in (\ref{eqn:main_ineq}), we have $\Phi(A) = \Phi(U \tilde{A}) = \Phi(U) \Phi(\tilde{A}) = U \tilde{A} = A$.

The only thing that remains to be proved is inequality (\ref{eqn:main_ineq}) when $A$ is not invertible. Let $\varepsilon > 0$. By Arveson's factorization theorem (Lemma \ref{lem:arveson_fac}, (ii)), there is an invertible operator $B$ in $\mathfrak{A} \cap \mathfrak{A}^{-1}$ such that $\varepsilon I + A^*A = B^*B$. We have $\varepsilon (B^{-1})^* B^{-1} +  (A B^{-1})^*(A B^{-1}) = I$. Using the generalized Schwarz inequality (Lemma \ref{lem:schwarz_ineq}), we have $I = \Phi(I) \ge \varepsilon \Phi(B^{-1})^* \Phi(B^{-1}) +  \Phi(A B^{-1})^*\Phi(A B^{-1}) = \varepsilon \Phi(B^{-1})^* \Phi(B^{-1}) + \Phi(B^{-1})^* \Phi(A)^* \Phi(A) \Phi(B^{-1})$ (since $\Phi(A B^{-1}) = \Phi(A) \Phi(B)^{-1}$). Thus $|\Phi(B)|^2 = \Phi(B)^*\Phi(B) \ge \varepsilon I + \Phi(A)^* \Phi(A) = \varepsilon I + |\Phi(A)|^2 \ge |\Phi(A)|^2$. Using the operator monotonicity of the map  $t \in \mathbb{R}_{+} \mapsto \sqrt{t}$, note that $|\Phi(A)|  \le |\Phi(B)|.$ As $f$ is increasing and inequality (\ref{eqn:main_ineq}) holds for $B$, we have $\tau(f(|\Phi(A)|)) < \tau(f(|\Phi(B)|)) \le \tau(f(|B|)) = \tau(f(\sqrt{\varepsilon I + |A|^2})).$ Taking the limit as $\varepsilon \rightarrow 0$, we conclude that $\tau(f(|\Phi(A)|)) \le \tau(f(|A|))$.

\end{proof}

\begin{remark}
Let $\mathcal{F}$ denote the set of increasing continuous functions  $f : \mathbb{R}_{+}  \to \mathbb{R}_{+}$ such that $f \circ \exp$ is convex on $\mathbb{R}$. Let $g : \mathbb{R}_{+} \to \mathbb{R}_{+}$ be an increasing convex function. Then 
\begin{itemize}
\item[(i)] $g \in \mathcal{F},$
\item[(ii)] if $f \in \mathcal{F},$ then $g \circ f \in \mathcal{F}$,
\item[(iii)] for $r > 0$ and $f \in \mathcal{F}$, the function $t \in  \mathbb{R}_{+} \mapsto f(t^r)$ belongs to $\mathcal{F}$.
\end{itemize}
Examples of functions in $\mathcal{F}$ include (for $r > 0$) $e^{rt}, t^r, \log(1+t^r)$, etc. This remark serves to illustrate the applicability of Theorem \ref{thm:main_ineq} for a rich class of commonly used functions.

\end{remark}

\begin{cor}
\label{cor:app}
\textsl{
For $A \in \mathfrak{A}$ and $r > 0$, we have 
\begin{itemize}
\item[(i)] $\tau(|\Phi(A)|^r) \le \tau(|A|^r)$,
\item[(ii)] $\Delta(I + |\Phi(A)|^r) \le \Delta(I + |A|^r)$.
\end{itemize}
If $A$ is invertible in $\mathscr{M}$, equality holds in either of the above two inequalities if and only if $\Phi(A) = A$.
}
\end{cor}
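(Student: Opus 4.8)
The plan is to derive both inequalities as immediate specializations of Theorem \ref{thm:main_ineq}; the only real work is to exhibit, for each part, a function $f$ meeting the hypotheses there (increasing, continuous, with $f \circ \exp$ convex, and strictly convex when the equality characterization is wanted). The remark immediately preceding this corollary is tailor-made for certifying membership in the relevant class $\mathcal{F}$, so I would lean on it for the convexity bookkeeping and compute by hand only where strict convexity is genuinely needed.

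For part (i), I would take $f(t) = t^p$. It is increasing and continuous on $\mathbb{R}_{+}$, and applying part (iii) of the preceding remark to the identity function shows $t \mapsto t^p$ lies in $\mathcal{F}$; moreover $(f \circ \exp)(s) = e^{ps}$ has second derivative $p^2 e^{ps} > 0$ and is therefore strictly convex for every $p > 0$. Since $f(|x|) = |x|^p$, Theorem \ref{thm:main_ineq} yields $\tau(|\Phi(a)|^p) \le \tau(|a|^p)$ directly, and its equality clause (applicable because $f \circ \exp$ is strictly convex) forces $\Phi(a) = a$ when $a$ is invertible.

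For part (ii), the first step is to convert the determinant inequality into a trace inequality. For an invertible positive operator $y \in \mathscr{M}$ one has $\log \Delta(y) = \tau(\log y)$, and since $\mathbb{1} + |\Phi(a)|^p \ge \mathbb{1}$ and $\mathbb{1} + |a|^p \ge \mathbb{1}$ are positive and invertible, the asserted inequality $\Delta(\mathbb{1} + |\Phi(a)|^p) \le \Delta(\mathbb{1} + |a|^p)$ is equivalent, via the strictly increasing $\log$, to $\tau(\log(\mathbb{1} + |\Phi(a)|^p)) \le \tau(\log(\mathbb{1} + |a|^p))$. This is exactly inequality (\ref{eqn:main_ineq}) for $f(t) = \log(1 + t^p)$, whose membership in $\mathcal{F}$ I would obtain by checking that $h(t) = \log(1+t)$ lies in $\mathcal{F}$ and then invoking part (iii) of the remark.

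The one genuine computation, which I regard as the main obstacle, is confirming the strict convexity of $f \circ \exp$ required for the equality statement in part (ii). Here $(f \circ \exp)(s) = \log(1 + e^{ps})$, and a direct differentiation gives $\frac{d^2}{ds^2}\log(1 + e^{ps}) = \frac{p^2 e^{ps}}{(1 + e^{ps})^2} > 0$, so the function is strictly convex. With this in hand, the equality clause of Theorem \ref{thm:main_ineq} again forces $\Phi(a) = a$ for invertible $a$, while the reverse implication is trivial in both parts. Assembling the two specializations completes the corollary.
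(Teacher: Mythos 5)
Your proposal is correct and follows essentially the same route as the paper: both specialize Theorem \ref{thm:main_ineq} to $f(t) = t^p$ and $f(t) = \log(1+t^p)$ and verify strict convexity of $e^{ps}$ and $\log(1+e^{ps})$ by the same second-derivative computations. The only difference is that you make explicit the step $\log\Delta(y) = \tau(\log y)$ converting (ii) into a trace inequality, which the paper leaves implicit.
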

\begin{proof}
The functions $t \in \mathbb{R}_{+} \mapsto t^r, t \in \mathbb{R}_{+} \mapsto \log(1+t^r)$ are both increasing. Note that as $$\frac{\textrm{d}^2}{\textrm{d}t^2}e^{rt} = r^2 e^{rt} > 0, \frac{\textrm{d}^2}{\textrm{d}t^2} \log(1+e^{rt}) = \frac{r^2 e^{rt}}{(1+e^{rt})^2} > 0,$$ the functions $t \in \mathbb{R} \mapsto e^{rt}, t \in \mathbb{R}  \mapsto \log(1 + e^{rt})$ are strictly convex. Thus the result follows from Theorem \ref{thm:main_ineq}.
\end{proof}

\begin{remark}
For $A \in M_n(\mathbb{C})$, we have $\Delta(A) = |\det (A) | ^{\frac{1}{n}}.$ Thus Corollary \ref{cor:app},(ii), generalizes Theorem \ref{thm:lin} and Lemma \ref{lem:drury} when considered in the context of the finite subdiagonal algebras described in Example \ref{ex:upper_tri}.
\end{remark}

Let $A$ be an operator in a finite subdiagonal algebra $\mathfrak{A}$. As $\Phi$ is a contractive map (being a conditional expectation), we observe that $\| \Phi(A)^n \| = \| \Phi(A^n) \| \le \|A^n \|.$ Thus we have $$r(\Phi(A)) = \lim_{n \rightarrow \infty} \|\Phi(A)^n \|^{\frac{1}{n}} \le \lim_{n \rightarrow \infty} \|A^n \|^{\frac{1}{n}} = r(A).$$
(Note that the above inequality is valid in any subdiagonal algebra, not just finite subdiagonal algebras.)  By Corollary \ref{cor:app}, (i), we have $\tau(|\Phi(A)|^r) \le \tau(|A|^r)$ for all $r > 0$. As $\tau$ is normal and $(|A|^r)_{0 < r \le 1}$ is a bounded family of positive operators converging in the strong-operator topology to $R(|A|)$ (the range projection of $|A|$) as $r \rightarrow 0$, we have 
\begin{equation}
\label{eqn:range_ineq}
\tau(R(|\Phi(A)|)) \le \tau(R(|A|)).
\end{equation}
Let $\lambda \in \sigma_p(A)$ so that $N(A - \lambda I) \ne 0$. Note that $N(X) = N(X^*X) = N(|X|) = I - R(|X|)$ for all $X \in \mathscr{M}$. Thus using inequality (\ref{eqn:range_ineq}), we have $0 < \tau(N(A-\lambda I)) = \tau(I - R(|A - \lambda I|)) = 1 - \tau(R(|A - \lambda I|)) \le 1 - \tau(R(|\Phi(A - \lambda I)|)) = \tau(I - R(|\Phi(A) - \lambda I|)) = \tau(N(\Phi(A) - \lambda I))$ which shows that $N(\Phi(A) - \lambda I) \ne 0$. Hence $\lambda \in \sigma_p(\Phi(A)).$ We summarize the above discussion in the form of a theorem below.

\begin{thm}
\label{thm:point_spec}
\textsl{
For $A \in \mathfrak{A}$, we have
\begin{itemize}
\item[(i)] $r(\Phi(A)) \le r(A)$,
\item[(ii)] $\sigma_p(A) \subseteq \sigma_p(\Phi(A)).$
\end{itemize}
}
\end{thm}

A natural question that comes to mind is whether we can say something stronger about the relationship between $\sigma(A)$ and $\sigma(\Phi(A))$. For instance, in the context of Example \ref{ex:upper_tri}, the spectrum of a block upper triangular matrix is identical to the spectrum of its diagonal. One may wonder whether that is always the case for operators in finite subdiagonal algebras. It turns out that neither the containment relation $\sigma(\Phi(A)) \subseteq \sigma(A)$ nor $\sigma(A) \subseteq \sigma(\Phi(A))$ holds in general. In the context of Example \ref{ex:hardy}, (i), consider $A$ to be the coordinate function $z \in H^{\infty}(\mathbb{T})$. Note that $\sigma(\Phi(A)) = \{ 0 \}$ (as $\Phi(A) = (\int_{\mathbb{T}} z \; \mathrm{d}\mu)I = 0)$, $\sigma(A) = \mathbb{T}$ and clearly  $\{ 0 \} \not\subset \mathbb{T}, \mathbb{T} \not\subset \{0 \}.$  Further in this scenario as $\sigma_p(A) = \varnothing \subset \{ 0 \} = \sigma_p(\Phi(A))$, we observe that it is possible to have a strict containment relation in Theorem \ref{thm:point_spec}, (ii).


\begin{thebibliography}{99}
\bibitem{arveson_analyticity}
W. Arveson; \emph{Analyticity in operator algebras}, Amer. J. Math., 89 (1967), 578--642.

\bibitem{drury}
S. Drury; \emph{The canonical correlations of a $2 \times 2$ block matrix with given eigenvalues}, Linear Algebra Appl. 354 (2002), 103--117.

\bibitem{can_corr}
S. Drury, S. Liu, C. Lu, S. Puntanen and G. Styan; \emph{Some comments on several matrix inequalities with applications to canonical correlations: historical background and recent developments},  Sankhy\={a}: The Indian Journal of Statistics, Series A, Vol. 64, No. 2, San Antonio Conference: selected articles (Jun., 2002), 453--507.

\bibitem{exel}
R. Exel; \emph{Maximal subdiagonal algebras}, Amer. J. Math., 110 (1988), 775--782.

\bibitem{fack}
T. Fack; \emph{Sur la notion de valuer caract\'{e}ristique}, Journal of Operator Theory, Vol. 7, No. 2 (Spring 1982), 307--333

\bibitem{fack-kosaki}
T. Fack, H. Kosaki; \emph{Generalized $s$-numbers of $\tau$-measurable operators}, Pacific J. Math. 123 (1986), no. 2, 269--300. 

\bibitem{fuglede-kadison}
B. Fuglede, R. Kadison; \emph{Determinant theory in finite factors}, Ann. of Math. (2) 55, (1952), 520--530.


\bibitem{labuschagne}
L. Labuschagne; \emph{A noncommutative Szeg\H{o} theorem for subdiagonal algebras of von Neumann algebras}, Proc. Amer. Math. Soc. 133 (2005), no. 12, 3643--3646.

\bibitem{lin}
M. Lin; \emph{Determinantal inequalities for block triangular matrices}, Math. Inequal. Appl., Vol. 18, Number 3 (2015), 1079--1086.

\bibitem{nayak_hadamard}
S. Nayak; \emph{The Hadamard determinant inequality - Extensions to operators on a Hilbert space}, J. Funct. Anal., Vol. 274, Issue 10 (May 2018), 2978--3002. 

\end{thebibliography}
\end{document}